\documentclass[a4paper]{article}

\usepackage{comment}
\usepackage{fullpage} 
\usepackage{parskip} 
\usepackage{tikz} 
\usepackage{hyperref}
\usepackage{xcolor}
\usepackage{soul}
\usepackage{amsmath,amssymb}
\usepackage{amsthm}
\usepackage{algorithm}
\usepackage{subcaption}
\usepackage{algpseudocode}
\newtheorem{theorem}{Theorem}[section]
\newtheorem{definition}[theorem]{Definition}
\newtheorem{lemma}[theorem]{Lemma}
\newtheorem{proposition}[theorem]{Proposition}

\newtheorem{assumption}{Assumption}
\usepackage{multirow}
\usepackage{multicol}
\usepackage{booktabs}
\usepackage{array}

\title{Enhancing Model Based Derivative Free Optimization using Direct Search}
\author{Zijun Li and Aswin Kannan\footnote{The first author is reachable at zijun.li@hu-berlin.de. The first author is a PhD candidate at Humboldt Universitaet zu Berlin, Germany under the guidance of the second author. The second author is the corresponding author and is reachable at both aswin.kannan@hu-berlin.de and aswin.kannan@iiitb.ac.in. The ORCID-ID for the corresponding author is 0000-0002-4698-7678. \\ 
The second author was a faculty member and Junior Research Group Leader at Humboldt Universitaet zu Berlin till December 2024. He is currently a faculty member at the International Institute of Information Technology, Bangalore, India (from January 2025). He still retains an affiliation to his Junior Group in Berlin.}}

\begin{document}
\maketitle
\abstract{
We consider single and multiobjective simulation-based optimization problems. Simulation-based optimization has traditionally used both model-based and search-based methods, often in isolation. Model-based methods include trust region approaches and Bayesian optimization, while search methods include genetic algorithms and Direct Search techniques. In this work, we propose a switching framework that leverages Direct Search methods to enhance the performance of model-based optimizers. Our contributions are twofold. First, in the single-objective setting, we analyze and prove asymptotic convergence of the proposed switching approach. Second, motivated by applications in machine learning, we consider classification and regression problems, where the objectives span accuracy, computational time, algorithmic bias, and sparsity. The models range from neural networks and decision trees to simpler KNN baselines. For machine learning, we introduce a warm-starting mechanism—reusing weights from previous hyperparameter or architectural configurations—to exploit problem structure and accelerate training. Beyond ML tasks, we evaluate the method on standard CUTEr test problems and compare its performance against Bayesian and trust region solvers. We observe consistently strong numerical performance, suggesting promise for the proposed switching-based approach.
}
\section{Introduction}

Derivative-Free Optimization (DFO) is an important branch of mathematical optimization focused on solving problems where the derivative information of the objective function may be unknown, noisy, expensive to obtain, or unavailable due to proprietary reasons. Post the inception of BOBYQA 
and Powell's works, these have gained significant prominence in a multitude of applications in engineering, science, and medicine. 

Many engineering applications, such as groundwater management, automotive and aircraft design, and chemical process optimization involve complex simulation-based optimization problems. In groundwater modeling, for example, the governing dynamics are typically described by partial or ordinary differential equations, and the associated simulations are computationally intensive. Constraints often pertain to physical or regulatory limits on flow levels, while objective functions may reflect economic costs such as those incurred by well installation or pumping operations. In automotive design, decision variables might include gear ratios, tire radii, or engine specifications, with objectives such as maximizing fuel economy (e.g., miles per gallon), subject to constraints like acceleration thresholds (e.g., $0–60$ mph times). Similar formulations arise in chemical process design, such as styrene production, and have been studied extensively across engineering domains~\cite{audet2008nonsmooth,SASW15ice}. In the context of machine learning, derivative-free optimization (DFO) plays a critical role in both hyperparameter tuning and in bi-level optimization problems, where the outer-level problem depends on solutions of inner learning tasks. The widespread reliance on surrogate losses—often dictated by solver artifacts—poses additional challenges, especially when optimization must align with user-specific or application-driven metrics. These difficulties are further compounded in multi-objective settings, where gradient-based optimization becomes even less tractable, motivating the use of DFO methods~\cite{golovin2017google,karasozen2007survey}.

While finite-difference approximations are a classical remedy for gradient unavailability, they become increasingly impractical when function evaluations are expensive, particularly in high-dimensional settings. Automatic differentiation provides an alternative in some cases, but is often infeasible for proprietary or multi-architecture simulation environments. Consequently, DFO methods are not merely convenient but necessary. Broadly, DFO algorithms can be categorized into model-based methods and search-based techniques. Model-based methods include both Bayesian optimization (BO) and trust-region (TR) frameworks. These approaches construct surrogate models of the objective function using only function evaluations. TR methods typically rely on quadratic interpolation or regression, while BO methods employ Gaussian process priors to build probabilistic models. Both strategies offer strong theoretical guarantees, including global convergence to first-order stationary points, and under suitable assumptions, to second-order points as well~\cite{larson2019derivative}. Recent advancements have also extended TR methods to handle noisy or stochastic function evaluations through adaptive sampling and regression-based model construction~\cite{gao2021performance}.

Search-based methods fall into two primary categories: heuristic algorithms, such as genetic algorithms~\cite{deb2002fast}, and direct search (DS) techniques~\cite{audet2006mesh,bigeon2021dmulti}. Genetic algorithms are population-based and rely on stochastic evolution principles, while DS methods explore the decision space via deterministic or randomized patterns—often iterating over a sequence of directions that collectively span $\mathbb{R}^n$. A classical example includes coordinate search, wherein directional evaluations are conducted until sufficient descent is obtained. Further methodological details and algorithmic formulations are presented in the subsequent sections of this paper.
\\ \hspace{1mm} \\

\paragraph{\textbf{Our Key Motivation:}}  
Model-based DFO methods, while appealing for their efficient use of function evaluations, can be computationally demanding due to the cost of constructing and minimizing surrogate models. Their performance is often contingent on favorable structural properties of the objective function. Trust-region (TR) methods, in particular, are sensitive to model quality—poorly fitted surrogates may result in stagnation or premature termination. As with many local search strategies, TR methods are prone to convergence to suboptimal local minima, especially when initial trust regions are poorly scaled. Moreover, their scalability is limited in higher dimensions unless exploitable structure, such as partial separability, is available to ease model construction~\cite{karasozen2007survey}. 

Bayesian optimization (BO) offers a more global perspective, but its capacity for local refinement is typically weaker than that of search-based methods. In contrast, direct search (DS) techniques circumvent explicit modeling by exploring the decision space through function evaluations at carefully chosen points. Canonical methods in this class include Generalized Pattern Search (GPS)~\cite{torczon1997convergence}, Mesh Adaptive Direct Search (MADS)~\cite{audet2006mesh}, and the Nelder--Mead simplex method~\cite{nelder1965simplex}. Their conceptual simplicity, lack of smoothness requirements, and established convergence theory make them particularly attractive in nonsmooth and modestly sized settings.

DS methods also offer practical advantages in warm-start scenarios and low-dimensional problems—features especially relevant to our setting, where the dimension is small ($n < 25$), and warm-starts can be naturally invoked via prior evaluations in ML-based inner problems. While DS methods are generally robust, they often converge slowly and may require a large number of evaluations to attain high-accuracy solutions, particularly as dimensionality increases.
Our central objective is to harness the complementary strengths of both model-based and direct search (DS) methods, motivating the development of hybrid approaches. Model-based methods are typically more efficient on smooth problems, often requiring fewer function evaluations when the underlying structure is favorable. In contrast, DS methods tend to perform better in the presence of noise, nonsmoothness, or local irregularities, where modeling becomes unreliable or expensive. By combining these paradigms, one can aim to balance global efficiency with local robustness, particularly in settings where problem characteristics vary across the search space.

\paragraph{\textbf{Contributions:}}  
The primary contribution of this work is the development of a novel derivative-free optimization (DFO) algorithm that dynamically switches between model-based and direct search (DS) methods during optimization. The goal is to leverage the complementary strengths of both paradigms to achieve more reliable and efficient performance than either method in isolation. Specifically, we provide theoretical guarantees for a hybrid approach combining trust-region (TR) and DS methods; the integration of Bayesian optimization (BO) with DS remains an open direction for future research.

While our algorithm is tested on general benchmark problems, its design is particularly tailored to multiobjective machine learning settings, where varying smoothness and noise properties often demand adaptive strategies. Our contributions are summarized below:
\begin{itemize}
    \item We present convergence guarantees for the proposed hybrid TR–DS method in the single-objective setting. Extensions to multiobjective optimization remain an avenue for future exploration.
    \item We evaluate our algorithm on a suite of problems, including multiobjective machine learning tasks from the energy domain, as well as standard problems from the CUTEst test set.
\end{itemize}

To clarify our proposed contribution, we provide a comparison of existing DFO strategies based on their smoothness assumptions, model requirements, and convergence properties. Table~\ref{tab:comparing_dfo} provides a comparative overview of classical TR methods, DS techniques (e.g., GPS and MADS), Bayesian optimization, and our proposed TR-DS method. While model-based methods offer efficiency on smooth problems and direct search provides robustness in nonsmooth settings, the TR-DS framework we propose aims to leverage their strengths by providing a dynamic switching mechanism.

\begin{table}[h]
\centering
\caption{
Comparison of DFO methodologies and the proposed TR-DS method.}
\label{tab:comparing_dfo}
\small
\begin{tabular}{p{2.1cm}|p{2.4cm}|p{2.1cm}|p{2.1cm}|p{3.8cm}}
\hline
\textbf{Methodology} & \textbf{Smoothness Assumptions} & \textbf{Model Requirements} & \textbf{Convergence Guarantee} & \textbf{Key Distinction / Strengths} \\
\hline
Classical TR-based (e.g., BOBYQA) &
$C^1$ + $L$-smooth &
Fully-linear models on trust regions &
First-order stationarity &
High efficiency on smooth problems; can stagnate when models are poor or regions are ill-scaled. \\
\hline
Generalized Pattern Search (GPS) &
None / local Lipschitz (for nonsmooth theory) &
None &
Clarke stationarity &
Simple and parallelizable; robust to noise but fixed directions can limit convergence speed. \\
\hline
Mesh Adaptive Direct Search (MADS) &
None &
None &
Clarke stationary &
Strong nonsmooth theory; asymptotically dense directions ensure robustness but requires many evaluations. \\
\hline
Bayesian Optimization (e.g., EGO) &
Kernel-dependent (typically continuous) &
Probabilistic surrogate (Gaussian Process) &
Asymptotic global convergence under strong assumptions &
Sample efficient for expensive functions; good global exploration but scales poorly with dimensions ($d > 25$). \\
\hline
\textbf{Proposed TR-DS} &
\textbf{$C^1$ + Lipschitz continuity} &
\textbf{Fully-linear models on trust regions} &
\textbf{First-order stationarity} &
\textbf{Dynamic switching mechanism: Monitors $\rho_k$ to invoke DS specifically for escaping model stagnation, combining TR speed with DS robustness.} \\
\hline
\end{tabular}
\end{table}


The remainder of the paper is structured as follows. In Section~\ref{sec:preliminary}, we formalize the optimization problem and introduce essential definitions. Section~\ref{sec:algorithm} describes our proposed hybrid TR–DS algorithm in detail, including both the trust-region and direct search components, along with the switching mechanism. This section also outlines a variant involving the combination of BO and DS. In Section~\ref{sec:convergence_theory}, we present the convergence analysis and establish theoretical guarantees for the TR–DS method. Section~\ref{sec:numeric} reports numerical experiments comparing the proposed approach with other DFO solvers on benchmark and real-world problems, with a focus on multiobjective machine learning tasks. Finally, Section~\ref{sec:conclusion} concludes the paper and discusses directions for future research.


\begin{table}[h]
\centering
\caption{Summary of frequently used symbols}
\label{tab:notations}
\begin{tabular}{|c|p{9cm}|}
\hline
Symbol & Description \\
\hline
$dp_s$ & data profile \\
$\delta_k$ & mesh size of the $k$th iteration of the DS method \\
$\Delta_k$ & trust-region radius of the $k$th iteration \\
$\Delta_{\min}$ & minimum trust-region radius \\
$\Delta_{\max}$ & maximum trust-region radius \\
$\mathbf{g}$ & gradient \\
$H$ & Hessian \\
$K_{\mathrm{ds}}$ & maximum number of evaluations occurs per DS iteration \\
$l_i$ & the $i$th Lagrange polynomial basis \\ 
$N^{ds}$ & the current number of TR-to-DS switch \\
$N^{ds}_{\text{fail}}$ & the current counts of failed improvement in the DS method \\
$m$ & constructed model \\
$M(\Phi, Y)$ & interpolation matrix \\
$\Phi$ & polynomial basis \\
$\mathcal{B}$ & trust region \\
$r_{p,s}$ & performance ratio \\
$\rho$ & acceptance ratio \\
$\varrho_s$ & performance profile \\
$\tau$ & tolerance \\
$T^{ds}_{\max}$ & maximum number of TR-to-DS switch \\
$T^{ds}_{\text{fail}}$ & total counts of failed improvement in the DS method \\
$w_i$ & the weight of the $i$ objective \\
$Y$ & interpolation set \\
\hline
\end{tabular}
\end{table}

\section{Problem setup and basic definitions} \label{sec:preliminary}
In this work, we are concerned with the following unconstrained optimization problem:
\begin{align*}
    \min_{\mathbf{x} \in \mathbb{R}^n} f(\mathbf{x}),
\end{align*}
where \( f: \mathbb{R}^n \to \mathbb{R} \) denotes the objective function. We assume that the explicit form of \( f \) is unknown and that its derivatives are either inaccessible or computationally prohibitive to evaluate.

Our primary aim is to integrate the strengths of model-based methods (MBMs) with the flexibility of direct search strategies. It is important to emphasize that our focus explicitly excludes evolutionary approaches such as genetic algorithms; rather, we confine our attention to derivative-free direct search techniques.

Among the various domains of application, our central motivation arises from multiobjective machine learning, a field that has seen increasing prominence due to its foundational relevance in contemporary data-driven problems. To that end, we provide a concise overview of multiobjective optimization and the notion of Pareto frontiers, in the interest of both completeness and clarity.

In addition, we briefly introduce key conceptual underpinnings of Bayesian Optimization (BO), Trust Region (TR) methods, and Direct Search (DS) algorithms. While our theoretical contributions pertain specifically to a hybrid approach combining TR and DS, we include a discussion of BO for context and completeness. Though both BO and TR fall within the class of model-based strategies, their operational philosophies diverge significantly: BO adopts a global perspective and typically accommodates noisy or expensive evaluations, whereas TR methods are inherently local and presuppose a degree of smoothness in the objective function.

\subsection{Trust Region Methods}
Among model-based methods in derivative-free optimization (DFO), trust region (TR) schemes remain the most widely adopted. While their origins trace back to foundational developments in mathematical programming~\cite{conn2000trust}, their deployment in simulation-based optimization gained momentum in the 1990s, particularly following the influential work of Powell~\cite{powell2006newuoa}. The central idea behind TR methods is rooted in the assumption that a smooth objective function can be approximated locally by a quadratic model within a restricted domain, known as the trust region. This region is typically defined by a radius around the current iterate, within which the model is assumed to be a reliable proxy for the objective.

In classical TR methods, this quadratic model is derived using gradient and Hessian information. However, in model-based DFO settings, where derivatives are unavailable or unreliable, surrogate models are instead constructed using interpolation over a set of sampled points. These models aim to mimic local curvature while relying on relatively few evaluations of the objective function. A basic TR algorithm~\cite{conn2009introduction, larson2019derivative} at iteration $k$ constructs a model $m_k(\mathbf{x})$ around the current point $\mathbf{x}_k$ and solves the following subproblem $\mathbf{s}_k$ within a ball of radius $\mathcal{B}(\mathbf{x}_k,\Delta_k)$:
\begin{align*}
\mathcal{B}(\mathbf{x}_k, \Delta_k) = \left\{\mathbf{x} \in \mathbb{R}^n : |\mathbf{x} - \mathbf{x}_k| \leq \Delta_k \right\}, \quad
\mathbf{s}_k = \underset{|\mathbf{s}|\leq \Delta_k}{\arg \min} \hspace{1mm} m_k(\mathbf{s}). 
\end{align*}

The model $m_k$ is typically quadratic parametrized by $c$, $\mathbf{g}$, and $H$. Note that $\mathbf{g}_k$ and $H_k$ approximate the gradient and Hessian of the unknown function $f(\mathbf x)$. These can be written in any of the two forms stated as follows.
\begin{align*}
m_k(\mathbf{x}_k + \mathbf{s}) = m_k(\mathbf{x}_k) + \mathbf{g}_k^T \mathbf{s} + \frac{1}{2} \mathbf{s}^T H_k \mathbf{s}, \hspace{1mm} \text{or} \hspace{1mm} \nonumber \\
m_k(\mathbf{x}) = c + \mathbf{g}^T \mathbf{x} + \frac{1}{2}\mathbf{x}^T H \mathbf{x}, \hspace{1mm} \text{where} \hspace{1mm} \mathbf{x} = \mathbf{x}_k+\mathbf{s}.
\end{align*}
Given samples and evaluations $\left\{\mathbf{x}_i, f(\mathbf{x}_i) \right\}_{i=1}^{m}$, the interpolation problem can be written as follows. 
\begin{align*}
& \min_{c, g, H} \hspace{3mm} \|H - \bar{H}\|_F^2 \nonumber \\
& \text{subject to: } c + \mathbf{g}^T \mathbf{x}_i + \frac{1}{2} (\mathbf{x}_i)^T H \mathbf{x}_i = f(\mathbf{x}_i).
\end{align*}
Here, the objective is to ensure that the current Hessian remains close to a reference $\bar{H}$ from the previous iteration, while satisfying interpolation constraints~\cite{powell2006newuoa}. In noisy regimes, some approaches forgo curvature matching and instead use simpler linear models by reducing the norm of the hessian in an absolute sense~\cite{SMW08MNH}. 
The quality of the step is then evaluated using the ratio
\begin{align}
\rho_k = \frac{f(\mathbf{x}_k) - f(\mathbf{x}_k + \mathbf{s}_k)}{m_k(\mathbf{x}_k) - m_k(\mathbf{x}_k + \mathbf{s}_k)}, \label{eq:rho_ratio}
\end{align}
which compares actual progress to predicted progress. Based on $\rho_k$, the trust region radius $\Delta_k$ is adjusted, and the iterate is either accepted or rejected. Crucially, these methods do not require precise gradients—descent can still be achieved using models that are fully linear or fully quadratic, concepts that relate to the approximation quality over the trust region. These ideas are further developed in sections~\ref{sec:algorithm} and~\ref{sec:convergence_theory}. We present the following definition of fully linear models, which marks the accuracy of the model deployed. 
\begin{definition} (Definition $10.3$ in~\cite{conn2009introduction}). \label{assumption:def_fully_linear}
    Given $\Omega \in \mathbb{R}^n$ is bounded and a function $f$ satisfies Assumption \ref{ass:smoothness}. A model $m_k(\mathbf{x} + \mathbf{s})$ for the function $f(\mathbf{x} + \mathbf{s})$ is called fully linear in $\mathcal{B}(\mathbf{x}, \Delta)$ if model $m \in C^1$ and there exist constants $\kappa_{ef}, \kappa_{eg}>0$, the following holds: 
    \begin{itemize}
        \item the error between the model and the function holds
        \begin{align*} 
        |f(\mathbf{x} + \mathbf{s}) - m (\mathbf{x} + \mathbf{s})| \leq \kappa_{ef} \Delta^2,
        \end{align*}
        and 
        \item the error between the gradient of $m_k$ and the gradient of $f$ holds
        \begin{align} \label{eq:FL_error_gradient}
        ||\nabla f(\mathbf{x} + \mathbf{s}) - \nabla m (\mathbf{x} + \mathbf{s})|| \leq \kappa_{eg} \Delta.
        \end{align}
    \end{itemize}
    for all $||\mathbf{s}||\leq \Delta$.
\end{definition}

\subsection{Direct Search}

Direct Search (DS) methods are useful when the objective function is non-smooth, noisy, or when derivatives are unavailable or unreliable. Unlike gradient-based methods or model-based approaches, DS algorithms only rely on evaluating the objective function at carefully chosen points, using a set of directions that span the space. Some well-known DS methods include Generalized Pattern Search (GPS)~\cite{torczon1997convergence,audet2002analysis}, Mesh Adaptive Direct Search (MADS)~\cite{audet2006mesh, bigeon2021dmulti}, Generating Set Search (GSS), and the Nelder-Mead simplex method~\cite{nelder1965simplex, mckinnon1998convergence}. In this work, we focus on MADS, which is widely accepted and has strong convergence guarantees.

Each iteration of MADS consists of two main steps: a search step (optional, more global) and a poll step (mandatory, more local). Both steps evaluate the objective on a mesh constructed around the current best solution $\mathbf{x}_k$. The mesh is defined as:
\begin{align*}
    M_k = \{\mathbf{x}_k + \delta_k \mathcal{D}^k z : z \in \mathbb{R}^p\},
\end{align*}
where $\delta_k > 0$ is the current mesh size (step length), and $\mathcal{D}^k$ is a matrix containing directions that form a positive spanning set. A typical choice for $\mathcal{D}^k$ includes all unit coordinate directions and their negatives, forming a simple grid around $\mathbf{x}_k$.

\textbf{Search Step:}  
The algorithm evaluates $f(\mathbf{x})$ at a small number of points in $S_k \subset M_k$. If a better point is found (i.e., $f(t) < f(\mathbf{x}_k)$ for some $t \in S_k$), it becomes the new $\mathbf{x}_{k+1}$, and the mesh size is increased (to allow broader exploration). If the search step is successful, the poll step is skipped.

\textbf{Poll Step:}  
If the search step fails, the algorithm evaluates $f(\mathbf{x})$ at points in the poll set:
\begin{align*}
    \mathcal{P}_k = \{\mathbf{x}_k + \delta_k d : d \in \mathcal{D}^k\}.
\end{align*}
If any of these points improve the objective, the poll is declared successful, and the algorithm updates $\mathbf{x}_k$ and expands $\delta_k$. If not, the mesh is refined by shrinking $\delta_k$ (e.g., $\delta_{k+1} = \tau \delta_k$ for some $0<\tau<1$), allowing for finer search.

\textbf{Termination:}  
The algorithm stops when certain conditions are met—for instance, when $\delta_k$ becomes too small (below a threshold $\delta_{\min}$) or when the allowed number of function evaluations is exhausted. For more details and convergence results, we refer the reader to~\cite{audet2006mesh, bigeon2021dmulti}.

\subsection{Bayesian optimization}

Bayesian optimization (BO) is a global optimization technique designed for expensive black-box functions, especially when gradients are not available. 
BO works by building a probabilistic surrogate model—usually a Gaussian Process (GP)—to approximate the unknown function $f$ over a bounded domain. At each step, an acquisition function is used to decide the next evaluation point by balancing two goals: exploring uncertain areas of the function and exploiting areas already known to perform well. This balance allows BO to find near-optimal solutions using fewer function evaluations than DS methods.

BO is especially useful for problems with low to moderate dimensions and high evaluation costs, such as tuning hyperparameters in machine learning models. However, when the function is cheap to evaluate or the input space is very high-dimensional, BO can become computationally expensive. We summarize a simple and generic pseudocode for BO as follows. For more background and technical depth, the reader is referred to~\cite{frazier2018tutorial,belakaria2020uncertainty,kannan2021hyperaspo}. 
\begin{itemize}
    \item \textbf{Initialization:} Start with an initial dataset
    \[
    \mathcal{R}_0 = \left\{ \left(\mathbf{x}_i, f(\mathbf{x}_i) \right)\right\}_{i=1}^k,
    \]
    where $\mathbf{x}_i$ are input samples and $f(\mathbf{x}_i)$ are the corresponding function values.
    
    \item \textbf{Model Estimation:} Fit a surrogate model (typically a Gaussian Process) to estimate the mean $\mu(\mathbf{x})$ and standard deviation $\sigma(\mathbf{x})$ of the function over the domain, based on $\mathcal{R}_t$.

    \item \textbf{Acquisition Optimization:} At iteration $t+1$, select the next query point by solving
    \[
    \mathbf{x}_{t+1} = \arg\min_x \alpha\left(\mu(\mathbf{x}), \sigma(\mathbf{x})\right),
    \]
    where $\alpha$ is the acquisition function that trades off exploration and exploitation.

    \item \textbf{Update and Repeat:} Evaluate $f(\mathbf{x}_{t+1})$ and update the dataset:
    \[
    \mathcal{R}_{t+1} = \mathcal{R}_t \cup \left\{(\mathbf{x}_{t+1}, f(\mathbf{x}_{t+1}))\right\}.
    \]
    Repeat the process until a termination criterion is met (e.g., budget limit or convergence).
\end{itemize}

\subsection{Multi-objective optimization}

A multi-objective optimization problem (MOP) involves minimizing (or maximizing) $m \geq 2$ conflicting objective functions over a feasible decision space. Formally, it is defined as:
\begin{align*}
\min_{x \in \mathcal{X}} F(\mathbf x) = \left[ f_{1}(\mathbf x), \hdots, f_{m}(\mathbf x) \right],
\end{align*}
where $\mathbf x \in \mathbb{R}^n$ is the decision variable, $\mathcal{X} \subseteq \mathbb{R}^n$ is the feasible set, and $f_i:\mathcal{X} \to \mathbb{R}$ are the objective functions.

A distinctive aspect of MOPs is that the objectives often conflict—improving one may worsen another. For example, in car design, minimizing cost can compromise safety or fuel efficiency. Unlike single-objective problems, MOPs typically do not have a single optimal solution. Instead, optimality is defined using the notion of Pareto dominance.

\begin{definition}[Pareto Dominance] \label{def:MOO_dominate}
A point $\mathbf x^a$ is said to \textbf{dominate} $\mathbf x^b$ if:
\begin{align*}
f_j(\mathbf x^a) < f_j(\mathbf x^b) \text{ for at least one } j, \quad \text{and} \quad f_i(\mathbf x^a) \leq f_i(\mathbf x^b) \text{ for all } i \neq j.
\end{align*}
We denote this as $\mathbf x^a \succcurlyeq \mathbf x^b$.
\end{definition}

In other words, $x^a$ is no worse in all objectives and strictly better in at least one.

\begin{definition}[Pareto Optimality] \label{def:MOO_pareto_optimal}
A point $x^* \in \mathcal{X}$ is \textbf{Pareto optimal} if no other point in $\mathcal{X}$ dominates it. The set of all such non-dominated solutions is called the \textbf{Pareto set}, and their image under $F$ is known as the \textbf{Pareto front}.
\end{definition}
In practical MOPs, the aim is to approximate the Pareto front well. A decision maker can then select a preferred solution based on specific trade-offs—e.g., balancing accuracy and model complexity in machine learning, or cost and safety in engineering design.

\paragraph{Algorithms:} Approximations to the Pareto frontier are commonly obtained using scalarization techniques, $\epsilon$-constraint methods, or heuristics. In this work, we do not consider heuristics. Scalarization methods reduce an MOP to a sequence of single-objective problems by assigning weights to each objective. A widely used scalarization is the \textit{weighted sum method}~\cite{ehrgott2002multiobjective,kim2006adaptive}:
\begin{align*}
    F_{ws}(\mathbf x) = \sum_{i=1}^m w_i f_i(\mathbf x),
\end{align*}
where weights satisfy $w_i \geq 0$ for all $i = 1, \ldots, m$ and $\sum_{i=1}^m w_i = 1$. Each choice of weights typically gives a different point on the Pareto frontier. Another popular approach is the \textit{$\epsilon$-constraint method}~\cite{mavrotas2009effective,deb2016multi}, where one objective is minimized while the others are shifted to the constraint space. For example, in a two-objective problem where both $f_1(\mathbf x)$ and $f_2(\mathbf x)$ are normalized between $0$ and $1$, the problem takes the form:
\begin{align*}
    \min_x \ f_1(\mathbf x), \quad \text{subject to: } f_2(\mathbf x) \leq \epsilon.
\end{align*}
Here, $\epsilon$ is varied over a range (e.g., from 0.1 to 1.0) to produce different solutions. The $\epsilon$-constraint method is especially effective when the Pareto frontier is nonconvex, where weighted sum methods may not capture all trade-offs~\cite{steuer1983interactive,dachert2012augmented}. We emphasize that the focus of this work is not on comparing multi-objective optimization techniques, but on demonstrating the advantage of hybrid algorithms such as BO/DS or TR/DS when applied to MOPs. Hence, the question on comparing weighted methods or $\epsilon-$constraints or heuristics for MOPs become less relevant in our context. 
\section{Algorithm} \label{sec:algorithm}
Prior to formally defining the algorithm, we introduce some foundational concepts involving \textbf{Lagrange polynomials} and \(\Lambda\)\textbf{-poisedness}, which underpin DFO (Derivative-Free Optimization) methods within trust-region frameworks. Broadly, \(\Lambda\)-poisedness characterizes the \emph{quality and geometric configuration} of the interpolation set, relating to the \emph{conditioning} and \emph{linear independence} of selected points. Lagrange polynomials form a \emph{basis for interpolation}, enabling the construction of surrogate models. These polynomials play a central role in analyzing approximation quality and are instrumental in establishing error bounds (as discussed in Section~\ref{sec:convergence_theory}) between the objective function and its surrogate.

\paragraph{Lagrange Polynomials and \(\Lambda\)-Poisedness.}
The accuracy of a quadratic surrogate model depends on how well its sample points are distributed within the trust region. The notion of \(\Lambda\)-poisedness quantifies this distribution and is central to model-based DFO algorithms. Although standard in the literature, we formally present the relevant definitions for clarity and completeness.

\begin{definition}[Definitions 3.3 and 3.6 in~\cite{conn2009introduction}] \label{def:lagrange_poly}
Let \( Y = \{ y^0, \dots, y^p \} \subset \mathbb{R}^n \) be a set of interpolation points. The corresponding \textbf{Lagrange polynomials} \( l_j(\mathbf x) \), for \( j = 0, \dots, p \), are defined by:
\begin{align*}
    l_j(y^i) = \delta_{ij} =
    \begin{cases}
    1, & \text{if } i = j \\
    0, & \text{if } i \neq j
    \end{cases}
\end{align*}
The set \( Y \) is said to be \(\Lambda\)-poised in a trust region \( \mathcal{B} \) if the Lagrange polynomial basis \( \{ l_0, \dots, l_p \} \) satisfies:
\begin{align*}
    \max_{0 \leq j \leq p} \max_{\mathbf{x} \in \mathcal{B}} |l_j(\mathbf{x})| \leq \Lambda
\end{align*}
\end{definition}
Lower values of \( \Lambda \) are desirable, as they indicate well-spread interpolation points and lead to more robust surrogate models. In practice, we test whether the current set \( Y \) is \(\Lambda\)-poised within the trust region \( \mathcal{B} \). If the condition fails, the model is refined—typically by sampling additional points or updating the interpolation set—until \(\Lambda\)-poisedness is achieved.

\paragraph{Switching Method:}
We propose an adaptive optimization framework that dynamically integrates Trust Region (TR) and Direct Search (DS) methods. While TR methods are typically effective in local optimization, they can suffer from stagnation when the trust region radius becomes too small, particularly in derivative-free settings where gradient approximations may be unreliable or uninformative. In such cases, we transition to a DS strategy, which leverages a broader set of search directions and is capable of taking larger, exploratory steps. This can help the solver escape shallow local minima or regions of slow progress.

To ensure continuity in the optimization process, we retain the current trust region radius during this switch, interpreting it as the mesh size in the DS framework. We are, however, not overly rigid with respect to descent criteria in the DS phase, acknowledging that multiple exploratory steps can be computationally inexpensive in our setting—a property supported by the underlying problem structure, as will be elaborated later. Nonetheless, DS methods also have their limitations, particularly in the absence of gradient information, which makes it difficult to impose structure or adapt efficiently. Therefore, after a finite number of DS iterations, if sufficient progress is not observed, we revert back to the TR scheme. We note that this switching mechanism is also applicable in the context of Bayesian Optimization combined with Direct Search (BO-DS). Although we do not explore the BO-DS variant in detail here, we refer the reader to~\cite{lion24-aswin} for a comprehensive treatment of that framework.


\paragraph{Algorithm:} The technical steps behind our joint scheme are formally described in algorithm~\ref{alg:alg_1}. We expand on the fine aspects and explain in detail the related steps as follows.
\begin{itemize}
\item Step 1: \textbf{Initialization.} We initialize with iteration index $k = 0$, starting point $\mathbf{x}_0 \in \mathbb{R}^n$, and initial trust region (TR) radius $\Delta_0 \in \mathbb{R}$, bounded by $\Delta_{\min}$ and $\Delta_{\max}$. The acceptance thresholds are set as $1 > \eta_1 > \eta_0 > 0$, with TR update parameters $\gamma_{\text{inc}} > 1 > \gamma_{\text{dec}} > 0$. The initial model and point set are denoted by $m_0(\mathbf{x})$ and $Y_0$. For the Direct Search (DS) phase, we define a threshold $\epsilon_{ds}$, the maximum and current number of TR-to-DS switches as $T^{ds}_{\max}$ and $N^{ds}$, and the total and current counts of failed improvement attempts as $T^{ds}_{\text{fail}}$ and $N^{ds}_{\text{fail}}$. All counters are initialized with $N^{ds} = 0$ and $N^{ds}_{\text{fail}} = 0$.

\item Step 2: \textbf{Trust Region (TR) Phase.}
At iteration $k$, solve the trust region subproblem~\eqref{eq:subproblem_TR} to obtain a step $\mathbf{s}_k$. Evaluate $f(\mathbf{x}_k + \mathbf{s}_k)$ and compute $\rho_k$ as in~\eqref{eq:rho_ratio}. Based on $\rho_k$, update $\mathbf{x}_k$, $\Delta_k$, and the model $m_k$ as follows. Thereafter, set $k \leftarrow k+1$.
\begin{itemize}
    \item \textbf{Successful} ($\rho_k \geq \eta_1$):  
    $\mathbf{x}_{k+1} = \mathbf{x}_k + \mathbf{s}_k$,  
    $\Delta_{k+1} = \min\{\gamma_{\text{inc}} \Delta_k, \Delta_{\max}\}$,  
    update $Y_{k+1} = Y_k \cup \{\mathbf{x}_{k+1}\}$ and model $m_{k+1}$.
    \item \textbf{Acceptable} ($\eta_0 \leq \rho_k < \eta_1$, $m_k$ fully linear):  
    $\mathbf{x}_{k+1} = \mathbf{x}_k + \mathbf{s}_k$,  
    $\Delta_{k+1} = \gamma_{\text{dec}} \Delta_k$,  
    update $Y_{k+1} = Y_k \cup \{\mathbf{x}_{k+1}\}$ and $m_{k+1}$.
    \item \textbf{Model improvement} ($\rho_k < \eta_1$, $m_k$ not fully linear):  
    $\mathbf{x}_{k+1} = \mathbf{x}_k$,  
    $\Delta_{k+1} = \Delta_k$,  
    improve $m_k$ for full linearity.
    \item \textbf{Unsuccessful} ($\rho_k < \eta_0$, $m_k$ fully linear):  
    $\mathbf{x}_{k+1} = \mathbf{x}_k$,  
    $\Delta_{k+1} = \gamma_{\text{dec}} \Delta_k$,  
    retain $m_{k+1} = m_k$.
\end{itemize}
        \item  Step 3: \textbf{Transition Check (TR to DS).} If iteration $k$ is unsuccessful and $N^{ds} \leq T^{ds}_{\max}$, increment $N^{ds}$ and proceed to Step 4. Otherwise, if $\Delta_k \geq \Delta_{\min}$, return to Step 2; else, terminate the algorithm.
\item \textbf{DS Phase.}  
Initialize $\mathbf{x}^{ds}_0 = \mathbf{x}_{k+1}$, $\delta_0 = \Delta_{k+1}$, $j = 0$. Keep $m_{k+1}$ fixed. At each step, generate trial points $S_j \subset \mathcal{M}_j$ around $\mathbf{x}^{ds}_j$, evaluate $f(\hat{\mathbf{x}})$ for unevaluated $\hat{\mathbf{x}} \in S_j$, and select the best point. If $f(\mathbf{x}^{ds}_j) - f(\mathbf{x}^{ds}_{j+1}) < \epsilon_{ds}$, set $\mathbf{x}^{ds}_j = \mathbf{x}^{ds}_j$, $\delta_j = \delta_{j-1}$, increment $N^{ds}_{\text{fail}}$, and proceed to Step 5. 
\item \textbf{Step 5: Reverse Switch (DS to TR).}  
If $N^{ds}_{\text{fail}} \geq T^{ds}_{\text{fail}}$, reset $N^{ds}_{\text{fail}} = 0$ and proceed to Step 6. Update: $\mathbf{x}_{k+1} = \mathbf{x}^{ds}_j$, $\Delta_{k+1} = \gamma_{\text{dec}} \Delta_k$, and $Y_{k+1} = Y_k \cup \{\mathbf{x}_{k+1}\}$. Ensure $Y_{k+1}$ is $\Lambda$-poised; if not, adjust accordingly.  
Rebuild the model $m_{k+1}$ based on the $\Lambda$-poised set, then continue to Step 6.
\item \textbf{Step 6: Termination.}  
Terminate if $\Delta_{k+1} < \Delta_{\min}$ or other stopping criteria (e.g., time or budget limits) are met.
\end{itemize}

\begin{algorithm}[htbp]
\caption{Combined Model-Based Trust Region Method with Direct Search (TR-DS)} 
\label{alg:alg_1}
\textbf{Input:} Initial point $\mathbf{x}_0 \in \mathbb{R}^n$; trust region radii $\Delta_{\min}, \Delta_{\max}$ and initial radius $\Delta_0 \in [\Delta_{\min}, \Delta_{\max}]$; acceptance thresholds $1 > \eta_1 \geq \eta_0 > 0$; update parameters $\gamma_{\text{inc}} > 1 > \gamma_{\text{dec}} > 0$; criticality threshold $\epsilon_{\text{low}}$; maximum allowed switches from TR to DS phase $T^{ds}_{\max}$.\\
\textbf{Output:} Approximate solution $\mathbf{x}_k$.

\begin{algorithmic}[1] 
    \State Initialize switch count $N^{ds} \gets 0$
    \State Choose initial set $Y_0$ and ensure $\Lambda$-poisedness using Algorithm~\ref{alg:check_poised}
    \State Construct initial model $m_0$ over $\mathcal{B}(\mathbf{x}_0, \Delta_0)$
    
    \For{$k = 1, 2, \ldots$}
        \If{$\|\mathbf{g}_k\| < \epsilon_{\text{low}}$} \Comment{Criticality step} \label{alg:criticality}
            \If{$\Delta_k > \mu \|\mathbf{g}_k\|$}
                \State Call Algorithm~\ref{alg:alg_2}
                \State $\Delta_k \gets \gamma_{\text{dec}} \Delta_k$
            \EndIf
        \EndIf

        \State Solve subproblem (\ref{eq:subproblem_TR})
        \State Evaluate $f(\mathbf{x}_k + \mathbf{s}_k)$ and compute $\rho_k$ using (\ref{eq:rho_ratio}) \label{alg:update_xk_in_TR}

        \If{$\rho_k \geq \eta_1$} \Comment{Successful iteration}
            \State $\mathbf{x}_{k+1} \gets \mathbf{x}_k + \mathbf{s}_k$
            \State $\Delta_{k+1} \gets \min\{\gamma_{\text{inc}} \Delta_k, \Delta_{\max}\}$
            \State $Y_{k+1} \gets Y_k \cup \{\mathbf{x}_{k+1}\}$
        
        \ElsIf{$\eta_0 \leq \rho_k < \eta_1$ and $Y_k$ is $\Lambda$-poised} \Comment{Acceptable iteration}
            \State $\mathbf{x}_{k+1} \gets \mathbf{x}_k + \mathbf{s}_k$
            \State $\Delta_{k+1} \gets \gamma_{\text{dec}} \Delta_k$
            \State $Y_{k+1} \gets Y_k \cup \{\mathbf{x}_{k+1}\}$
        
        \ElsIf{$\rho_k < \eta_1$ and $Y_k$ is not $\Lambda$-poised} \Comment{Model improvement phase}
            \State $\mathbf{x}_{k+1} \gets \mathbf{x}_k$
            \State $\Delta_{k+1} \gets \Delta_k$
            \State Restore $\Lambda$-poisedness using Algorithm~\ref{alg:check_poised}
            \State Rebuild model $m_{k+1}$ using updated $Y_k$
        
        \ElsIf{$\rho_k < \eta_0$ and $Y_k$ is $\Lambda$-poised} \Comment{Unsuccessful iteration}
            \State $\mathbf{x}_{k+1} \gets \mathbf{x}_k$
            \State $\Delta_{k+1} \gets \gamma_{\text{dec}} \Delta_k$
            \State \texttt{current\_method} $\gets$ DS

            \If{\texttt{current\_method} == TR}
                \If{$N^{ds} \leq T^{ds}_{\max}$} \Comment{Switch allowed}
                    \State \texttt{current\_method} $\gets$ DS
                    \State $N^{ds} \gets N^{ds} + 1$
                \Else
                    \State \texttt{current\_method} $\gets$ TR \Comment{No further switches}
                \EndIf
            \ElsIf{\texttt{current\_method} == DS and $N^{ds} \leq T^{ds}_{\max}$}
                \State Call Algorithm~\ref{alg:alg_3}
                \State \texttt{current\_method} $\gets$ TR
                \State $\mathbf{x}_{k+1} \gets \mathbf{x}^{ds}_j$ \Comment{Best point from DS phase}
                \State $Y_{k+1} \gets Y_k \cup \{\mathbf{x}^{ds}_j\}$
                \State Ensure $\Lambda$-poisedness of $Y_{k+1}$ using Algorithm~\ref{alg:check_poised}
                \State Rebuild model $m_{k+1}$ using $Y_{k+1}$
            \EndIf
        \EndIf

        \State $k \gets k + 1$
        \State \textbf{Terminate} if $\Delta_{k+1} < \Delta_{\min}$; otherwise, return to line~\ref{alg:criticality}
    \EndFor
\end{algorithmic}
\end{algorithm}

\begin{algorithm}[htbp]
    \caption{Criticality step} \label{alg:alg_2}
    \textbf{Input:} $\Delta_k$ from Algorithm \ref{alg:alg_1}, $\omega \in (0,1)$, $\mu >\beta>0$.
    \begin{algorithmic}[1]
        \State \textbf{Initialization:} Set $\Delta_0 \leftarrow \Delta_k$ and $m_k^0 \leftarrow m_k$.
        \For{$i=1,2,...$}
        \State Set $\Delta^i_0 \leftarrow \omega^{i-1} \Delta_0$
        \State Update $m_k^i$ until it is fully linear on $\mathcal{B}(\mathbf{x}_k, \Delta^i_0)$, and obtain $\mathbf{g}_k^i$
        \If{$\Delta^i_0 \leq \mu ||\mathbf{g}_k^i||$}
        \State $\Delta_k \leftarrow \max \{\Delta_0^i, \beta ||\mathbf{g}_k^i|| \}$ and $m_k \leftarrow m_k^i$
        \State Break
        \EndIf
        \EndFor
    \end{algorithmic}
\end{algorithm}


\begin{algorithm}[htbp]
    \caption{Direct search method (DS)} \label{alg:alg_3}
    \textbf{Input:} Threshold $\epsilon_{ds}$, the maximum number of evaluations allowed $K_{\mathrm{ds}}$, the total number of times failure to achieve the improvement criteria $T^{ds}_{\text{fail}}$, $\mathbf{x}_{k+1}$, $f(\mathbf{x}_{k+1})$ and $\Delta_{k+1}$ from Algorithm \ref{alg:alg_1}
    \begin{algorithmic}[1]
    \State Set $\mathbf{x}_{0}^{ds} = \mathbf{x}_{k+1}$, mesh size parameter $\delta_0 = \Delta_{k+1}$, and initial iteration within DS $j=0$
    \State Set current number of times failure to achieve the improvement criteria $N^{ds}_{\text{fail}}=0$
    \State Set current\_method == DS  \Comment{Start with DS}
    \While{current\_method == DS and $j \leq K_{\mathrm{ds}}$}
    \If{$f(\mathbf{x}_{j-1}^{ds}) - f(\mathbf{x}_{j}^{ds}) \geq \epsilon_{ds}$}
    \Comment{DS gets sufficient descent}
        \State Update mesh size $\delta_{j}$
        \State Keep running DS (i.e., current\_method == DS) 
    \ElsIf{$f(\mathbf{x}_{j-1}^{ds}) - f(\mathbf{x}_{j}^{ds}) < \epsilon_{ds}$} \Comment{DS does not get sufficient descent}
        \State $N^{ds}_{\text{fail}} \leftarrow N^{ds}_{\text{fail}} +1$
        \State Still set current\_method == DS
        \If{$N^{ds}_{\text{fail}} \geq T^{ds}_{\text{fail}}$}
            \State Change current\_method == TR, i.e., switch from the DS to the TR phase 
            \State Set $\mathbf{x}^{ds}_{j} = \mathbf{x}^{ds}_{j-1}$ and $\delta_j=\delta_{j-1}$
        \EndIf
        
    \EndIf
    \State Increment iteration counter $j \gets j + 1$
    \EndWhile
    \State \textbf{Output:} $\mathbf{x}^{ds}_{j}$
\end{algorithmic}
\end{algorithm}

\begin{algorithm}[htbp]
\caption{Assess and regain $\Lambda$-poisedness of $Y_k$} \label{alg:check_poised}
\begin{algorithmic}[1]
\State \textbf{Input:} Interpolation set $Y_k$, polynomial basis $\Phi$
\State Construct the interpolation matrix $M(\Phi, Y_k)$
\If{$M(\Phi, Y_k)$ is full rank}
    \State $Y_k$ is $\Lambda$-poised; \Return $Y_k$
\Else
    \While{$M(\Phi, Y_k)$ is not full rank}
        \State Identify a subset of points in $Y_k$ involved in the linear dependence
        \State Compute $f(y^i)$ for all such $y^i$
        \State Remove $y^{\max} = \underset{y^i \in Y_k}{\mathrm{argmax}} f(y^i)$
        \State Update $Y_k \leftarrow Y_k \setminus \{ y^{\max} \}$
        \State Recompute $M(\Phi, Y_k)$
        \If{$M(\Phi, Y_k)$ is full rank}
            \State $Y_k$ is $\Lambda$-poised; \Return $Y_k$
        \EndIf   
    \EndWhile
\EndIf
\end{algorithmic}
\end{algorithm}
\subsection{Conditioning and Interpolation}
Most DFO algorithms require the interpolation set to be $\Lambda$-poised for model-based steps to be well-defined. Full rank of the interpolation matrix guarantees $\Lambda$-poisedness for some (possibly large) $\Lambda$. While a small $\Lambda$ improves numerical stability and practical performance, a large $\Lambda$ is still acceptable for theoretical correctness. Thus, full rank ensures that the DFO algorithm can proceed, even if the geometry is poor.
In this work, we do not intend to improve the $\Lambda$-poisedness of the set of points in consideration for interpolation. This entails a deeper geometric examination of the interpolation sets and we leave this for future research.

Consider an interpolation set $Y = \{ y^0, y^1, \dots, y^p \} \subset \mathbb{R}^n$, a polynomial basis $\Phi = \{\phi_0(\mathbf x), \dots, \phi_q(\mathbf x)\}$. Given a polynomial model $m(y) = \alpha^T \Phi(y)$ with coefficients $\alpha = [\alpha_0, \dots, \alpha_q]^T$, we obtain $\alpha$ by solving the following equations. 
\begin{align} \label{eq:interpolation_condition}
 m(y^i) =\sum_{j=0}^q \alpha_j \phi_j (y^i) = f(y^i), \quad i = 0, 1, \dots, p.
\end{align}

\textbf{Addition of points:} Since the removal of points does not hamper the rank of the system, we focus on the case of adding a new point to the existing interpolation set. The interpolation conditions (\ref{eq:interpolation_condition}) can be written as a linear system as follows:
\begin{align} \label{eq:interpolation_linear}
    M(\Phi, Y) \alpha_{\Phi} = f(Y),
\end{align}
Note that the interpolation matrix $M(\Phi, Y)$, associated vectors $\alpha_{\Phi}$ and $f(Y)$ are defined as follows. 
\[M(\Phi, Y)=\begin{bmatrix}
    \phi_0 (y^0) & \phi_1 (y^0) & \dots  & \phi_q (y^0) \\
    \phi_0 (y^1) & \phi_1 (y^1) & \dots  & \phi_q (y^1) \\
    \vdots & \vdots & \ddots & \vdots \\
    \phi_0 (y^p) & \phi_1 (y^p) & \dots  & \phi_q (y^p)
\end{bmatrix},
\alpha_{\Phi}=\begin{bmatrix}
    \alpha_0 \\ \alpha_1 \\ \vdots \\ \alpha_p
\end{bmatrix},
f(Y)=\begin{bmatrix}
    f(y^0) \\ f(y^1) \\ \vdots \\ f(y^p)
\end{bmatrix}.
\]
Let point $z \in \mathbb{R}^n$  be a new candidate obtained from DS. We want the augmented set $\Bar{Y} = Y \bigcup \{ z \}$ to retain its full rank. 
We proceed by examining the rank of the augmented interpolation matrix \( M(\Phi, \Bar{Y}) \). Augmenting the matrix may result in one of two scenarios: either the resulting matrix is of full rank, or it is rank-deficient. In the following discussion, we aim to establish conditions under which the matrix \( M(\Phi, \Bar{Y}) \) — and consequently the interpolation set \( \Bar{Y} \) — possesses full rank.

Given that we are working with a quadratic surrogate model, the interpolation matrix \( M(\Phi, Y) \) in~\eqref{eq:interpolation_linear} can be partitioned into linear and nonlinear components as follows:
\begin{align*}
    M(\Phi, Y) = \left[ M(\Phi_l, Y) \quad M(\Phi_{nl}, Y) \right],
\end{align*}
where \( \Phi_l \) includes the constant and linear terms, and \( \Phi_{nl} \) consists of the quadratic and cross-product terms. For example, in a two-dimensional problem, the basis functions are given by
\begin{align*}
    \Phi_l = \left\{ 1, x_1, x_2 \right\}, \quad \Phi_{nl} = \left\{ \frac{x_1^2}{2}, x_1 x_2, \frac{x_2^2}{2} \right\},
\end{align*}
where \( x_1 \) and \( x_2 \) denote the first and second input dimensions, respectively.

In cases where the interpolation matrix is rank-deficient, we perform the following steps to restore linear independence:

\begin{enumerate}
    \item Identify linearly dependent points \( y^i \in Y^d \subset Y \) via QR factorization (see~\cite{kannan2011obtaining} for details). That is, for some coefficients \( \alpha_j \neq 0 \),
    \[
        y^i = \sum_{j \in Y \setminus Y^d} \alpha_j y^j.
    \]
    
    \item Evaluate the objective function at the dependent points \( f(y^i) \).
    
    \item Remove the point with the highest objective value, i.e.,
    \[
        y^{\text{max}} = \underset{y^i \in Y^d}{\arg\max} \, f(y^i).
    \]
    
    \item Construct a reduced interpolation set by replacing \( y^{\text{max}} \) with the new point \( z \), and verify whether the matrix regains full rank:
    \[
        \Bar{Y}_{\text{RD}} = (Y \setminus \{y^{\text{max}}\}) \cup \{z\}.
    \]
    If not, repeat the process.
\end{enumerate}

We will now demonstrate that, following the above procedure, the resulting interpolation matrix \( M(\Phi, \Bar{Y}) \) retains full rank. This property will be instrumental in establishing the convergence theory developed in Section~\ref{sec:convergence_theory}.

\begin{proposition}[Full-Rank Conditions]
Let $Y \subset \mathbb{R}^n$ be an interpolation set such that all $y_i \in Y$ are nonzero and 
$n+1 \le p \le q$. Assume that the interpolation matrix 
$M(\Phi, Y) \in \mathbb{R}^{(p+1)\times(q+1)}$ is of full row rank, and that the linear block 
$M(\Phi_l, Y) \in \mathbb{R}^{(p+1)\times(n+1)}$ is of full column rank. Then the following hold:
\begin{enumerate}
\item[(i)] (\textbf{Full-rank case}) For an augmented set $\bar Y^{FR} = Y \cup \{z\}$, the matrices 
$M(\Phi_l, \bar Y^{FR}) \in \mathbb{R}^{(p+2)\times(n+1)}$ and 
$M(\Phi, \bar Y^{FR}) \in \mathbb{R}^{(p+2)\times(q+1)}$ are full rank, provided that 
$z \notin \operatorname{span}(Y)$.
\item[(ii)] (\textbf{Rank-deficient case}) Suppose $y_{\max}=y_k$ is identified as linearly dependent 
and removed. Replacing it with $z$, we form 
$\bar Y^{RD} = (Y \setminus \{y_k\}) \cup \{z\}$. 
If $M(\Phi_l, \bar Y^{RD})$ has full column rank, then 
$M(\Phi, \bar Y^{RD}) \in \mathbb{R}^{(p+1)\times(q+1)}$ has full row rank. 
\end{enumerate}
\end{proposition}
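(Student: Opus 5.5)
The plan is to treat both parts as statements about rows of the interpolation matrix. Write $\phi(y) = (\phi_l(y), \phi_{nl}(y))$ for the row of $M(\Phi,\cdot)$ associated with a point $y$. Here $\phi_l(y) = (1, y^T)$ is the constant-plus-linear part and $\phi_{nl}(y)$ is the quadratic part. "Full rank" for a $(r\times c)$ matrix will mean rank $\min\{r,c\}$.

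\emph{Part (i).} First, $M(\Phi_l,\bar Y^{FR})$ contains $M(\Phi_l,Y)$ as a row-submatrix. Adding a row cannot decrease the column rank, so it keeps full column rank $n+1$.

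For $M(\Phi,\bar Y^{FR})$ we split on the shape.
\begin{itemize}
\item If $p = q$, the matrix $M(\Phi,Y)$ is square and nonsingular, so the augmented matrix has column rank $q+1$, which is full.
\item If $p < q$, we show $\phi(z)$ is not in the row space of $M(\Phi,Y)$. Suppose $\phi(z) = \sum_i c_i \phi(y_i)$. Reading off the linear coordinates of $\phi_l$ gives $z = \sum_i c_i y_i \in \operatorname{span}(Y)$, contradicting the hypothesis. Hence the $p+2$ rows are independent.
\end{itemize}
This part is routine.

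\emph{Part (ii).} The rows $\{\phi(y_i)\}_{i\neq k}$ are a subset of the independent rows of $M(\Phi,Y)$, so they are independent. Full row rank of $M(\Phi,\bar Y^{RD})$ is therefore equivalent to $\phi(z)\notin \operatorname{span}\{\phi(y_i)\}_{i\ne k}$. I argue by contradiction: assume $\phi(z) = \sum_{i\ne k} c_i\phi(y_i)$. This is equivalent to a nonzero vector $w=(c,-1)$ in the left null space of $M(\Phi,\bar Y^{RD})$. Such a $w$ is simultaneously in the left null space of the linear block $M(\Phi_l,\bar Y^{RD})$ and of the quadratic block $M(\Phi_{nl},\bar Y^{RD})$.

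The intended route is to play this $w$ against the two hypotheses. The steps, in order, are:
\begin{enumerate}
\item Use the linear coordinates to get $\sum_{i\ne k} c_i = 1$ and $z=\sum_{i\ne k}c_i y_i$, so $z$ is an affine combination of the retained points.
\item Use that $y_k$ was removed precisely because it is a linearly dependent point. Write $y_k = \sum_{j\ne k}\alpha_j y_j$, and compare $w$ with the dependency among the original points.
\item Use the full row rank of $M(\Phi,Y)$ to forbid the quadratic block from annihilating any combination that also annihilates the linear block over the original set.
\item Use the assumed full column rank of $M(\Phi_l,\bar Y^{RD})$, which guarantees $z$ enters the linear block non-degenerately, to transfer this non-annihilation to $\bar Y^{RD}$.
\end{enumerate}

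\emph{Main obstacle.} I expect step 3 to be the hard one. Because $p\ge n+1$, the left null space of the linear block of $\bar Y^{RD}$ has dimension $p-n\ge1$. So column rank alone cannot rule out $w$; everything hinges on showing that $\phi_{nl}(z)\neq\sum_{i\ne k}c_i\phi_{nl}(y_i)$. My first attempt is to use the quadratic identity $\phi_{nl}(\sum c_i y_i) - \sum c_i\phi_{nl}(y_i)$. With $\sum c_i=1$, this equals $-\tfrac12\sum_{i<j}c_ic_j\,\mathrm{sym}\big((y_i-y_j)(y_i-y_j)^T\big)$ in the basis coordinates. I would then try to show this vanishes only in the degenerate configurations excluded by the full rank of $M(\Phi,Y)$.

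If this direct argument stalls, the fallback is a genericity version. The failure set $\{z : \phi(z)\in\operatorname{span}\{\phi(y_i)\}_{i\ne k}\}$ is cut out by the vanishing of $(p+1)$-minors, which are polynomials in $z$. These polynomials are not identically zero, because $z=y_k$ gives a nonzero minor by full rank of $M(\Phi,Y)$. So the failure set is a proper algebraic subset of $\mathbb{R}^n$. This shows the conclusion holds for all $z$ outside a measure-zero set. Closing the remaining gap to every $z$ satisfying only the stated column-rank hypothesis is the step I am least sure can be done.
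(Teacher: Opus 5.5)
Your argument for (i) is valid, but it is vacuous. Full column rank of $M(\Phi_l,Y)$ means the points of $Y$ lie in no affine hyperplane, so $\operatorname{span}(Y)=\mathbb{R}^n$ and no $z$ satisfies the hypothesis.

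The real gap is in steps 3--4 of (ii), and it cannot be closed, because (ii) is false as stated. Take $n=2$, $\Phi=(1,x_1,x_2,\tfrac12x_1^2,x_1x_2,\tfrac12x_2^2)$, $p=4$, and $Y=\{(1,0),(2,0),(3,0),(0,1),(0,2)\}$. All points are nonzero, and $M(\Phi_l,Y)$ has rank $3$. Also $M(\Phi,Y)$ has rank $5$: on the columns $1,x_1,\tfrac12x_1^2,x_2,\tfrac12x_2^2$ it is block lower triangular, with nonsingular diagonal blocks built from $(1,t,\tfrac12t^2)$, $t=1,2,3$, and from $\left(\begin{smallmatrix}1&1/2\\2&2\end{smallmatrix}\right)$. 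Remove $y_k=(0,2)=2\,(0,1)$, which is linearly dependent in the sense of the paper's procedure, and insert $z=(4,0)$. Then $M(\Phi_l,\bar Y^{RD})$ still has rank $3$. However, the rows $(1,t,0,\tfrac12t^2,0,0)$, $t=1,\dots,4$, of the four collinear points are annihilated by $(-1,3,-3,1)$, so $M(\Phi,\bar Y^{RD})$ has rank at most $4<5$. Here even $M(\Phi,Y\cup\{z\})$ is singular, so the example sits inside the rank-deficient scenario.

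Your own dimension count explains the failure. The left null space of the linear block has dimension $p-n\ge 1$, and the column-rank hypothesis puts no constraint on $\phi_{nl}(z)$. Full row rank of $M(\Phi,Y)$ only certifies that $y_k$---not $z$---avoids the common zero set of the quadratics vanishing on $Y\setminus\{y_k\}$.

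The paper's proof of (ii) rests on exactly the inference you could not justify: that full row rank is ``inherited'' from the linear block because the quadratic part contributes only higher-order terms. So there is no missing trick. What can be proved are the two weaker statements you were circling.

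\begin{itemize}
\item Your genericity fallback is correct. The failure set is the common zero set of the quadratics in $\operatorname{span}\Phi$ that vanish on $Y\setminus\{y_k\}$. It misses $y_k$, so it is a proper algebraic, measure-zero set.
\item Alternatively, suppose $\phi(z)$ lies in the row space of $M(\Phi,Y)$ (the rank-deficient case if $p<q$, automatic if $p=q$). Write $\phi(z)=\sum_i c_i\phi(y_i)$ with unique coefficients. By the Steinitz exchange lemma, $M(\Phi,\bar Y^{RD})$ has full row rank if and only if $c_k\neq 0$, with no hypothesis on the linear block needed.
\end{itemize}

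So the condition that makes (ii) true is that $y_k$ lies in the support of the row dependency created by $z$. It is not enough that $y_k$ is a linear combination of the other points as a vector of $\mathbb{R}^n$.
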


\begin{proof}
The argument follows standard interpolation set results (see Conn, Scheinberg, Vicente, 
\emph{Introduction to Derivative-Free Optimization}, Chap.~3). 

(i) Since $M(\Phi_l, Y)$ is of full column rank, the $n+1$ linear terms are linearly independent 
over $Y$. Adding $z$ with $z \notin \operatorname{span}(Y)$ appends a new row to 
$M(\Phi_l, Y)$ that cannot be expressed as a linear combination of the existing rows. 
Hence, $M(\Phi_l, \bar Y^{FR})$ has rank $n+1$. Since $M(\Phi, Y)$ was full row rank, 
the quadratic block $M(\Phi_{nl},Y)$ is already compatible with the linear part. 
Thus, the augmented matrix $M(\Phi,\bar Y^{FR})$ has rank $p+2$.

(ii) In the rank-deficient case, the QR factorization identifies a subset $Y^d \subset Y$ of 
linearly dependent points. Removing $y_{\max} = \arg\max_{y_i \in Y^d} f(y_i)$ and inserting $z$ 
produces a new interpolation set $\bar Y^{RD}$. By assumption, 
$M(\Phi_l, \bar Y^{RD})$ is of full column rank, hence the constant and linear terms are preserved. 
Since the nonlinear part adds only higher-order contributions, and the system size is unchanged, 
the resulting interpolation matrix $M(\Phi,\bar Y^{RD})$ inherits full row rank from the linear 
block. 

Therefore, in both cases the augmented set preserves the rank condition needed for 
well-posed interpolation.
\end{proof}

\section{Convergence analysis} \label{sec:convergence_theory}

We now outline the convergence analysis of the proposed hybrid TR–DS algorithm. 
The procedure begins with a classical trust-region (TR) phase and invokes direct-search (DS) steps whenever a TR step fails to provide adequate improvement. 
As described in Section~\ref{sec:algorithm}, convergence behavior depends on whether the number of TR–DS switches is finite or infinite. 
We subsequently discuss conditions ensuring that interpolation matrices maintain full rank across switches. 
We begin with compact regularity assumptions and notation conventions used throughout this section.

\subsection{Preliminaries and Assumptions}
\begin{assumption}[Lipschitz Gradients and Bounded Hessians]
\label{ass:smoothness}
Assume the following conditions hold:
\begin{itemize}
    \item[(a)] \textbf{Lipschitz Gradients of $f$:}  
    The feasible set $\Omega \subset \mathbb{R}^n$ is bounded, and  
    $f:\mathbb{R}^n \!\to\! \mathbb{R}$ is continuously differentiable on~$\Omega$.  
    Its gradient satisfies the Lipschitz condition
    \begin{equation*}
    \|\nabla f(\mathbf{x}) - \nabla f(\mathbf{y})\|
    \le L_{\nabla f} \|\mathbf{x} - \mathbf{y}\|,
    \qquad \forall\, \mathbf{x}, \mathbf{y} \in \Omega,
    \end{equation*}
    for some constant $L_{\nabla f} > 0$, implying that $f$ is bounded on~$\Omega$.
    \item[(b)] \textbf{Bounded model Hessians:}  
    The Hessian $H_k$ of the quadratic model $m_k$ satisfies
    \[
    \|H_k\| \le \kappa_{\mathrm{bhm}}, 
    \qquad \forall k, \quad 0 < \kappa_{\mathrm{bhm}} < \infty.
    \]
\end{itemize}
\end{assumption}
\begin{assumption}[Cauchy Decrease]
\label{ass:cauchy}
Let $\mathbf{s}_k$ solve the TR subproblem
\begin{equation}
\label{eq:subproblem_TR}
\min_{\|\mathbf{s}\|\le \Delta_k} m_k(\mathbf{x}_k+\mathbf{s}).
\end{equation}
Then the model satisfies the Cauchy decrease bound
\begin{equation}
\label{eq:cauchy_step}
m_k(\mathbf{x}_k)-m_k(\mathbf{x}_k+\mathbf{s}_k)
\ge\tfrac{1}{2}\|\mathbf{g}_k\|
\min\!\left\{\Delta_k,\frac{\|\mathbf{g}_k\|}{\|H_k\|}\right\},
\end{equation}
where $\|\mathbf{g}_k\|/\|H_k\|=+\infty$ if $H_k=0$.
\end{assumption}

\paragraph{Fully Linear Models and Iteration Dynamics:} Following~\cite{conn2009introduction}, a model $m_k$ built from interpolation points $Y_k$ is \emph{fully linear} on $\mathcal{B}(\mathbf{x}_k,\Delta_k)$, 
if $Y_k$ is $\Lambda$-poised in that region.
Since Algorithm~\ref{alg:alg_1} enforces $\Lambda$-poisedness, our focus on 
fully linear models follows, which underpins TR-style convergence guarantees.
We further partition iterations of Algorithms~\ref{alg:alg_1}–\ref{alg:alg_2} as follows. 
\begin{align*}
\mathcal{S}&: \text{successful},\quad
\mathcal{A}: \text{acceptable},\quad
\mathcal{MI}: \text{model-improving}, \quad
\mathcal{U}: \text{unsuccessful},\\
\mathcal{C}^{r}&: \text{criticality with radius reduction},\quad
\mathcal{C}^{nr}: \text{criticality without reduction}.
\end{align*}
These categories underpin subsequent analyses of finite versus infinite switching
and rank preservation of the interpolation matrices. For completeness and to set the stage, we restate the following lemma from~\cite{conn2009introduction} and~\cite{cartis2019derivative}.  
\begin{lemma}[Fully Linear Models and Successful Iterations]
\label{lemma:fullylinear_success}
Under Assumptions~\ref{ass:smoothness} and~\ref{ass:cauchy}, let $m_k$ be a quadratic model 
built on $\mathcal{B}(\mathbf{x}_k,\Delta_k)$ with gradient $\mathbf{g}_k$ and Hessian~$H_k$.  
Then the following hold:
\begin{itemize}
    \item Models constructed by Algorithm~\ref{alg:alg_2} become fully linear after finitely many iterations $N_{\text{cri}}$ and satisfy $\|\mathbf{g}_k\|\ge \Delta_k/\mu$ for any $\mu>0$~\cite[Lemma~10.5]{conn2009introduction}.
    \item If $m_k$ is fully linear and
    \[
    \Delta_k \le 
    \min\!\left\{\tfrac{\|\mathbf{g}_k\|}{\kappa_{\mathrm{bhm}}},
    \tfrac{\|\mathbf{g}_k\|(1-\eta_1)}{4\kappa_{\mathrm{ef}}}\right\},
    \]
    then iteration~$k$ is successful~\cite[Lemma~10.6]{conn2009introduction}.
    \item Furthermore, it holds that
    \[
    \|\mathbf{g}_k\| \ge \min\!\left\{ \epsilon_{\mathrm{low}}, \tfrac{\Delta_k}{\mu} \right\}
    \quad \text{for all } k,
    \]
    and if $\|\nabla f(\mathbf{x}_k)\|\ge \epsilon>0$, then
    \[
    \|\mathbf{g}_k\|\ge 
    \min\!\left\{\epsilon_{\mathrm{low}},
    \tfrac{\epsilon}{\kappa_{\mathrm{eg}}\mu+1}\right\}>0
    \quad\text{\cite[Lemma~3.8]{cartis2019derivative}}.
    \]
\end{itemize}
\end{lemma}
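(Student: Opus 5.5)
Lemma~\ref{lemma:fullylinear_success} restates three standard results, so the plan is to check that the proofs in~\cite{conn2009introduction} and~\cite{cartis2019derivative} carry over to the TR iterations of Algorithm~\ref{alg:alg_1}. The DS excursions of Algorithm~\ref{alg:alg_3} only change the iterate and then rebuild a $\Lambda$-poised model; they never enter the argument for a single TR iteration. The items are proved in order, because the third uses the first.

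\textbf{Criticality loop.} For the first bullet, consider the $i$-th pass of Algorithm~\ref{alg:alg_2}. The model $m_k^i$ is fully linear on $\mathcal{B}(\mathbf{x}_k,\omega^{i-1}\Delta_0)$, so by \eqref{eq:FL_error_gradient},
\[
\|\mathbf{g}_k^i-\nabla f(\mathbf{x}_k)\|\le \kappa_{eg}\,\omega^{i-1}\Delta_0 .
\]
Suppose first that $\nabla f(\mathbf{x}_k)\neq 0$. Then $\|\mathbf{g}_k^i\|\ge \|\nabla f(\mathbf{x}_k)\|-\kappa_{eg}\omega^{i-1}\Delta_0$. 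Since $\omega^{i-1}\Delta_0\to 0$, the test $\omega^{i-1}\Delta_0\le\mu\|\mathbf{g}_k^i\|$ must hold once
\[
\omega^{i-1}\Delta_0\le \frac{\|\nabla f(\mathbf{x}_k)\|}{\kappa_{eg}+1/\mu},
\]
which gives an explicit bound $N_{\mathrm{cri}}$ on the number of passes. On exit, $\Delta_k=\max\{\Delta_0^i,\beta\|\mathbf{g}_k^i\|\}$. With $\beta<\mu$ this yields $\Delta_k\le\mu\|\mathbf{g}_k\|$, that is, $\|\mathbf{g}_k\|\ge\Delta_k/\mu$. If instead $\nabla f(\mathbf{x}_k)=0$, the loop may not terminate, but then $\mathbf{x}_k$ is already first-order stationary. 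This is the usual caveat and I would state it explicitly.

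\textbf{Successful iterations.} For the second bullet I would follow the classical estimate. The condition $\Delta_k\le\|\mathbf{g}_k\|/\kappa_{\mathrm{bhm}}$ makes the minimum in \eqref{eq:cauchy_step} equal to $\Delta_k$, using $\|H_k\|\le\kappa_{\mathrm{bhm}}$. Hence the predicted decrease is at least $\tfrac12\|\mathbf{g}_k\|\Delta_k$. Full linearity bounds both $|f-m_k|$ at $\mathbf{x}_k$ and at $\mathbf{x}_k+\mathbf{s}_k$ by $\kappa_{ef}\Delta_k^2$. Therefore
\[
|\rho_k-1|\le\frac{2\kappa_{ef}\Delta_k^2}{\tfrac12\|\mathbf{g}_k\|\Delta_k}=\frac{4\kappa_{ef}\Delta_k}{\|\mathbf{g}_k\|}\le 1-\eta_1,
\]
where the last step uses the second part of the hypothesis. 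So $\rho_k\ge\eta_1$ and the iteration is successful.

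\textbf{Uniform lower bound on the model gradient.} For the third bullet, split into cases at each iteration. If $\|\mathbf{g}_k\|\ge\epsilon_{\mathrm{low}}$ there is nothing to prove. If $\|\mathbf{g}_k\|<\epsilon_{\mathrm{low}}$, the criticality step runs, and after it $\|\mathbf{g}_k\|\ge\Delta_k/\mu$, either by the first bullet or because the test $\Delta_k\le\mu\|\mathbf{g}_k\|$ already held. Together these give $\|\mathbf{g}_k\|\ge\min\{\epsilon_{\mathrm{low}},\Delta_k/\mu\}$. For the final claim, assume $\|\nabla f(\mathbf{x}_k)\|\ge\epsilon$ and $\|\mathbf{g}_k\|<\epsilon_{\mathrm{low}}$. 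After criticality the model is fully linear with $\Delta_k\le\mu\|\mathbf{g}_k\|$, so
\[
\epsilon\le\|\mathbf{g}_k\|+\kappa_{eg}\Delta_k\le(1+\kappa_{eg}\mu)\|\mathbf{g}_k\|,
\]
which is the stated bound.

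\textbf{Main obstacle.} The difficult part is bookkeeping, not analysis. After a DS phase or a poisedness repair by Algorithm~\ref{alg:check_poised}, the model must again be fully linear on the current ball, so that the inequalities above apply with the same constants $\kappa_{ef},\kappa_{eg}$. I would argue this from the rule that every DS return rebuilds $m_{k+1}$ on a $\Lambda$-poised set with uniformly bounded $\Lambda$, and the constants depend only on $\Lambda$ and $L_{\nabla f}$. Uniformity of $\Lambda$ is what has to be checked carefully.
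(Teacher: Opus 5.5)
Your arguments for the first two bullets and for $\|\mathbf{g}_k\|\ge\min\{\epsilon_{\mathrm{low}},\Delta_k/\mu\}$ are correct. They are the standard proofs behind the results the paper only cites; the paper gives no argument of its own. Your caveat $\nabla f(\mathbf{x}_k)\neq 0$ for termination of Algorithm~\ref{alg:alg_2} matches the hypothesis of \cite[Lemma~10.5]{conn2009introduction}.

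The step that does not go through is the final claim of the third bullet. You assert that after the criticality step the model is fully linear with $\Delta_k\le\mu\|\mathbf{g}_k\|$. But consider the branch you identified one sentence earlier, where $\|\mathbf{g}_k\|<\epsilon_{\mathrm{low}}$ and $\Delta_k\le\mu\|\mathbf{g}_k\|$ already holds. There Algorithm~\ref{alg:alg_1} does nothing: Algorithm~\ref{alg:alg_2} is not called, so nothing certifies full linearity. Algorithm~\ref{alg:alg_1} also explicitly admits non-fully-linear models through its model-improvement branch. So in this branch the estimate $\|\nabla f(\mathbf{x}_k)-\mathbf{g}_k\|\le\kappa_{eg}\Delta_k$ is unavailable, and the bound does not follow. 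Nothing prevents a poorly poised model from having small $\mathbf{g}_k$ and $\Delta_k\le\mu\|\mathbf{g}_k\|$ while $\|\nabla f(\mathbf{x}_k)\|\ge\epsilon$. The cited lemmas work because in \cite{conn2009introduction} the criticality step is entered whenever the model is not certifiably fully linear \emph{or} $\Delta_k>\mu\|\mathbf{g}_k\|$. There, a small $\mathbf{g}_k$ always comes with a certified fully linear model.

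To close the gap, you have two options. You can strengthen the test in Algorithm~\ref{alg:alg_1} in the same way. Or you can invoke the paper's standing claim at the start of Section~\ref{sec:convergence_theory} that every $m_k$ is fully linear on $\mathcal{B}(\mathbf{x}_k,\Delta_k)$. The latter is really your ``main obstacle'', and it cannot be verified from the algorithm. Algorithm~\ref{alg:check_poised} only restores full rank, and Section~\ref{sec:algorithm} concedes that this gives $\Lambda$-poisedness with a possibly large, non-uniform $\Lambda$. Uniform constants $\kappa_{ef},\kappa_{eg}$ therefore have to be assumed, not checked.

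A smaller point concerns the branch where Algorithm~\ref{alg:alg_2} does run. It certifies full linearity only on $\mathcal{B}(\mathbf{x}_k,\Delta_0^i)$. The radius actually used afterwards is $\gamma_{\mathrm{dec}}\max\{\Delta_0^i,\beta\|\mathbf{g}_k^i\|\}$, which can be larger or smaller than $\Delta_0^i$. Bound the gradient error with the certified radius instead: $\|\nabla f(\mathbf{x}_k)-\mathbf{g}_k^i\|\le\kappa_{eg}\Delta_0^i\le\kappa_{eg}\mu\|\mathbf{g}_k^i\|$, which gives $(1+\kappa_{eg}\mu)\|\mathbf{g}_k^i\|\ge\epsilon$ directly. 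If you also want full linearity on the final ball, you need the standard fact that full linearity passes to larger balls (up to $\Delta_{\max}$) with adjusted constants.
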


\begin{lemma} \label{lemma:finite_successful_iter}
    Suppose Assumptions \ref{ass:smoothness}(a), \ref{ass:smoothness}(b). and \ref{ass:cauchy} hold. If there are a finite number of successful iterations, then 
    \begin{align*}
        \lim_{k\to+\infty} \Delta_k = 0 \quad \text{and} \quad \lim_{k\to+\infty} ||\nabla f(\mathbf{x}_k)|| = 0. 
    \end{align*}
\end{lemma}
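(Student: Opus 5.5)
The argument follows the classical one for trust-region DFO methods (Lemma~10.8 in~\cite{conn2009introduction}, Lemma~3.10 in~\cite{cartis2019derivative}). The only extra work is checking that the DS phase and the switching mechanism do not break it. Since there are finitely many successful iterations, there is an index $k_0$ after which every iteration is acceptable, model-improving, unsuccessful, or a criticality step.

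\textbf{Step 1: $\Delta_k \to 0$.} For $k \ge k_0$ the radius is never increased. Acceptable, unsuccessful and criticality-with-reduction iterations multiply $\Delta_k$ by $\gamma_{\text{dec}}$. Algorithm~\ref{alg:alg_2} also only shrinks the radius, since it sets $\Delta_k \leftarrow \max\{\Delta_0^i,\beta\|\mathbf{g}_k^i\|\}$ with $\beta<\mu$, so the result is at most the original radius. The DS-to-TR reverse switch sets $\Delta_{k+1}=\gamma_{\text{dec}}\Delta_k$, and there are at most $T^{ds}_{\max}$ switches in any case. Model-improvement iterations leave $\Delta_k$ unchanged. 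However, after a model-improvement iteration the set $Y_k$ is $\Lambda$-poised, so the model is fully linear, and the next iteration (with no success) must be acceptable or unsuccessful and hence shrink the radius. So at least every other iteration multiplies $\Delta_k$ by $\gamma_{\text{dec}}$, which gives $\Delta_k\le \Delta_{k_0}\gamma_{\text{dec}}^{\lfloor (k-k_0)/2\rfloor}\to 0$. The delicate point here is ruling out infinitely many consecutive model-improvement steps. I would handle it with the full-rank proposition: each call to Algorithm~\ref{alg:check_poised} terminates with a $\Lambda$-poised set.

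\textbf{Step 2: $\|\nabla f(\mathbf{x}_k)\|\to 0$.} Split with the triangle inequality,
\[
\|\nabla f(\mathbf{x}_k)\|\le \|\nabla f(\mathbf{x}_k)-\mathbf{g}_k\|+\|\mathbf{g}_k\|.
\]
For the first term, consider the iterations $k_i$ at which $m_{k_i}$ is fully linear; by Step 1 these are at least every other iteration. At these iterations \eqref{eq:FL_error_gradient} gives $\|\nabla f(\mathbf{x}_{k_i})-\mathbf{g}_{k_i}\|\le\kappa_{eg}\Delta_{k_i}\to0$.

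For the second term, argue by contradiction. Suppose $\|\mathbf{g}_{k_i}\|\ge\epsilon'>0$ along a subsequence. Because $\Delta_{k_i}\to0$, eventually
\[
\Delta_{k_i}\le\min\{\epsilon'/\kappa_{\mathrm{bhm}},\ \epsilon'(1-\eta_1)/(4\kappa_{\mathrm{ef}})\},
\]
and the second bullet of Lemma~\ref{lemma:fullylinear_success} then says iteration $k_i$ is successful. That contradicts the finiteness of successful iterations, so $\|\mathbf{g}_{k_i}\|\to0$.

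\textbf{Step 3: all iterates.} The iterations that are not fully linear are model-improvement iterations. At such an iteration $\mathbf{x}_{k+1}=\mathbf{x}_k$, and the next iteration is fully linear, so it inherits the same gradient value $\nabla f(\mathbf{x}_k)$. DS phases happen only finitely often, since they are bounded by $T^{ds}_{\max}$, so they do not affect the tail. Hence $\|\nabla f(\mathbf{x}_k)\|\to0$ along the whole sequence.

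I expect the main obstacle to be Step~1's bookkeeping: showing that the interleaving of model-improvement steps, criticality steps and the finitely many DS excursions can never stall the decrease of $\Delta_k$.
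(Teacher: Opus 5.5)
Your proposal is correct and follows the same route as the paper, namely the classical Conn--Scheinberg--Vicente argument for finitely many successful iterations: after the last success the radius is cut at least once every boundedly many iterations (you bound runs of model-improvement steps by one via Algorithm~\ref{alg:check_poised}, the paper by $\max\{N_{\text{cri}},N_{\text{mi}}\}$), the at most $T^{ds}_{\max}$ DS excursions touch only finitely many iterations, and $\|\mathbf{g}_k\|\to 0$ along fully linear iterations by contradiction with the second bullet of Lemma~\ref{lemma:fullylinear_success}, while model-improvement iterations inherit the gradient because $\mathbf{x}$ does not move (this plays the role of the paper's $\|\mathbf{x}_i-\mathbf{x}_{j_i}\|$ term; you also rightly keep acceptable iterations among the radius-reducing ones, which the paper's case list wrongly rules out). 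The one justification to repair is at the criticality step: $\beta<\mu$ does not imply $\max\{\Delta_0^i,\beta\|\mathbf{g}_k^i\|\}\le\Delta_0$, because nothing in Algorithm~\ref{alg:alg_2} bounds the recomputed $\|\mathbf{g}_k^i\|$ by $\Delta_0/\beta$, so you need the cap $\min\{\cdot,\Delta_k\}$ from the original CSV criticality step---the paper makes this same assumption tacitly when it asserts $\Delta_i\le\Delta_{k_0}$ for $i>k_0$.
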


\begin{proof}
We let $T^{ds}_{\max}$ denote the maximum number of DS switches,
and note that the outer iteration counter increases once per TR or DS iteration. Let $k_0$ be the last successful iteration in the TR phase, so $\Delta_i \le \Delta_{k_0}$ for all $i>k_0$, and let $N^{ds}$ be the number of DS switches used by $k_0$. 
For any $i>k_0$, the iteration is one of the following:

\begin{itemize}
\item \textbf{Acceptable step:} Impossible by definition of $k_0$.
\item \textbf{Model-improving step:} If $m_i$ is not fully linear, 
then we either apply Algorithm~\ref{alg:alg_2} or perform a model-improving step. In the former case, we know that the number of iterations $N_{\text{cri}}$ is finite according to Lemma~\ref{lemma:fullylinear_success}; in the latter one, there are also a finite number of iterations $N_{\text{mi}}$ (see Chapter $6$ in~\cite{conn2009introduction}). 
In other words, there are at most $\max\{N_{\text{cri}}, N_{\text{mi}}\}$ iterations required to make $m_i$ be fully linear. Thus $\Delta_i$ is reduced infinitely often, implying $\Delta_i \to 0$. For $j_i$ the first iteration after $i$ with $m_{j_i}$ fully linear,
\[
\|\mathbf x_i - \mathbf x_{j_i}\| \le \max\{N_{\text{cri}}, N_{\text{mi}}\} \Delta_i \to 0.
\]
\item \textbf{Switching step:} At most $T^{ds}_{\max} - N^{ds}$ remain, 
so
\[
\|\mathbf x_i - \mathbf x_{j_i}\| \le \left( T^{ds}_{\max} - N^{ds} \right)\Delta_{k_0}.
\]
\item \textbf{Unsuccessful step:} $\Delta_i$ decreases; consistent with $\Delta_i\to 0$.
\end{itemize}

Hence $\|\mathbf x_i - \mathbf x_{j_i}\|\to 0$. Using Lipschitz continuity and model gradient error bounds,
\begin{align*}
\|\nabla f(\mathbf x_i)\|
&\le \|\nabla f(\mathbf x_i)-\nabla f(\mathbf x_{j_i})\| + \|\nabla f(\mathbf x_{j_i})- \mathbf{g}_{j_i}\| + \|\mathbf{g}_{j_i}\| \\
&\le L_{\nabla f}\|\mathbf x_i - \mathbf x_{j_i}\| + \kappa_{eg}\Delta_{j_i} + \|\mathbf{g}_{j_i}\|.
\end{align*}
The first two terms vanish. If $\|\mathbf{g}_{j_i}\|\not\to 0$, there exists $\epsilon>0$ and a subsequence with $\|\mathbf{g}_{j_{i_k}}\|\ge \epsilon$, but Lemma~\ref{lemma:fullylinear_success} would then imply $j_{i_k}$ is successful, contradicting $k_0$. Therefore,
\[
\boxed{\lim_{i\to\infty}\|\nabla f(\mathbf x_i)\| = 0.}
\]
\end{proof}
\begin{lemma} \label{lemma:TR_radius}
Suppose Assumptions \ref{ass:smoothness} and \ref{ass:cauchy} hold. Then 
\begin{align*}
    \lim_{k\to+\infty} \Delta_k = 0.
\end{align*}
\end{lemma}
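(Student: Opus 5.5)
Following the template of Lemma~10.8 in~\cite{conn2009introduction}, I would split on the number of successful iterations. If $\mathcal{S}$ is finite, Lemma~\ref{lemma:finite_successful_iter} already gives $\Delta_k\to 0$. So the work is in the case where $\mathcal{S}$ is infinite. There I would first show $\Delta_k\to 0$ along successful iterations, and then transfer this to all iterations.

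For the successful iterations, I would use a sum-of-decreases argument. For $k\in\mathcal{S}$ we have $\rho_k\ge\eta_1$. Combined with the Cauchy decrease \eqref{eq:cauchy_step} and $\|H_k\|\le\kappa_{\mathrm{bhm}}$, this gives
\[
f(\mathbf x_k)-f(\mathbf x_{k+1})\ \ge\ \tfrac{\eta_1}{2}\|\mathbf g_k\|\min\left\{\Delta_k,\tfrac{\|\mathbf g_k\|}{\kappa_{\mathrm{bhm}}}\right\}.
\]
The third item of Lemma~\ref{lemma:fullylinear_success} gives $\|\mathbf g_k\|\ge\min\{\epsilon_{\mathrm{low}},\Delta_k/\mu\}$. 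Substituting, the decrease is bounded below by a quantity of the form $c\min\{\Delta_k^2,\Delta_k\}$ with $c>0$ independent of $k$.

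I then need that no other iteration type increases $f$, so that the per-iteration decreases telescope. Acceptable iterations have $\rho_k\ge\eta_0>0$, so they decrease $f$. Model-improvement, unsuccessful and criticality iterations keep $\mathbf x_k$ fixed. In a DS phase, Algorithm~\ref{alg:alg_3} returns the best point found, and when it switches back it resets $\mathbf x^{ds}_j=\mathbf x^{ds}_{j-1}$, so it is nonincreasing as well. Hence $\{f(\mathbf x_k)\}$ is monotone nonincreasing. Since $f$ is bounded below on the bounded set $\Omega$ by Assumption~\ref{ass:smoothness}(a), summing over $\mathcal S$ gives $\sum_{k\in\mathcal S}\min\{\Delta_k^2,\Delta_k\}<\infty$. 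Therefore $\Delta_k\to0$ along $k\in\mathcal S$.

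For a general $k$, let $s_k$ be the last successful iteration before $k$. Between $s_k$ and $k$ the radius never increases:
\begin{itemize}
\item acceptable and unsuccessful iterations multiply it by $\gamma_{\mathrm{dec}}$;
\item model-improvement iterations leave it unchanged;
\item the criticality step only shrinks it or sets it to $\max\{\Delta^i_0,\beta\|\mathbf g^i_k\|\}\le\Delta_k$, using $\beta<\mu$;
\item the return from DS sets $\Delta_{k+1}=\gamma_{\mathrm{dec}}\Delta_k$.
\end{itemize}
So $\Delta_k\le\gamma_{\mathrm{inc}}\Delta_{s_k}$. Because $\mathcal S$ is infinite, $s_k\to\infty$, and hence $\Delta_k\to0$. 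Iterations before the first successful one are finitely many and do not matter.

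The step I expect to be the main obstacle is the bookkeeping for the DS phase and the criticality step. For DS, I must confirm that Algorithm~\ref{alg:alg_3} never raises $f(\mathbf x_k)$, which I would argue from its best-point output and the reset rule. For the criticality step, I must confirm that it never enlarges $\Delta_k$ beyond its entry value, so that the monotonicity chain between consecutive successful iterations is not broken. Since DS mesh sizes do not feed back into $\Delta_k$, apart from $\Delta_{k+1}=\gamma_{\mathrm{dec}}\Delta_k$ on return, I would state this explicitly as the key observation.
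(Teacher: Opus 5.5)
Your proof follows the paper's argument essentially step for step. You split on $|\mathcal S|$, combine $\rho_k\ge\eta_1$, the Cauchy bound, $\|\mathbf g_k\|\ge\min\{\epsilon_{\mathrm{low}},\Delta_k/\mu\}$ and monotonicity of $f$ (including across DS phases) to get $\Delta_k\to0$ on $\mathcal S$, and then bound $\Delta_k\le\gamma_{\mathrm{inc}}\Delta_{s_k}$ between successful iterations; your summation of the decreases is a cleaner version of the paper's ``these decreases must converge to zero''.

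One correction on the criticality step: $\beta<\mu$ does not give $\max\{\Delta_0^i,\beta\|\mathbf g_k^i\|\}\le\Delta_k$, because the break test $\Delta_0^i\le\mu\|\mathbf g_k^i\|$ bounds $\|\mathbf g_k^i\|$ only from below. Non-increase of the radius there needs the safeguard $\min\{\max\{\Delta_0^i,\beta\|\mathbf g_k^i\|\},\Delta_k\}$ from the standard criticality step in~\cite{conn2009introduction}, which the paper's Algorithm~2 omits. The paper's proof tacitly assumes the same property when it says the radius can increase only at successful iterations.
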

\begin{proof}
Let $|\mathcal{S}|$ be the cardinality of the set of successful iterations. If $|\mathcal{S}|<\infty$, the result follows from Lemma~\ref{lemma:finite_successful_iter}. We therefore consider the case $|\mathcal{S}|=\infty$.
For any $k\in\mathcal{S}$, the definition of a successful iteration yields
\[
\rho_k 
  = \frac{f(\mathbf x_k)-f(\mathbf x_{k+1})}{m_k(\mathbf x_k)-m_k(\mathbf x_k + \mathbf s_k)}
  \ge \eta_1,
\]
and therefore
\[
f(\mathbf x_k)-f(\mathbf x_{k+1})
  \ge
\eta_1\left(m_k(\mathbf x_k)-m_k(\mathbf x_k + \mathbf s_k)\right)
  \ge
\frac{\eta_1}{2}\,\|\mathbf{g}_k\|\,
\min\!\left\{\Delta_k,\,\frac{\|\mathbf{g}_k\|}{\|H_k\|}\right\},
\]
where the last inequality follows from~\eqref{eq:cauchy_step}. From Lemma~\ref{lemma:fullylinear_success},
\[
\|\mathbf{g}_k\|\ge \min\left\{\epsilon_{\mathrm{low}},\frac{\Delta_k}{\mu}\right\},
\]
which gives
\[
f(\mathbf x_k)-f(\mathbf x_{k+1})
\ge
\frac{\eta_1}{2}
\min\left\{\epsilon_{\mathrm{low}},\,\frac{\Delta_k}{\mu}\right\}
\min\!\left\{
\frac{\min\{\epsilon_{\mathrm{low}},\,\Delta_k/\mu\}}{\|H_k\|},
\Delta_k
\right\}.
\]
At this stage we observe that, independent of whether the iteration is successful, unsuccessful, or involves a switch to or from the DS procedure, the algorithm never accepts an iterate with a higher function value; all accepted iterates satisfy a monotonic decrease condition. Since $f$ is bounded below by Assumption~\ref{ass:smoothness} and $|\mathcal{S}|=\infty$, these decreases must converge to zero, implying
\[
\Delta_k \to 0 \qquad \text{for all } k\in\mathcal{S}.
\]
Now consider $k\notin\mathcal{S}$ and let $i_k\in\mathcal{S}$ be the most recent successful iteration prior to $k$. Note that $i_k \leq k < i_{k+1}$, where $k$ lies between two successful iterations. We distinguish two cases:
\begin{itemize}
    \item \textbf{Switch to DS.}  
    Suppose the switch to DS occurs at iteration $k_1$ with $k_1>i_{k}$ (i.e., $\rho_{k_1}<\eta_0$ under $\lambda$-poisedness). We note that $k_1\in\mathcal{U}$. During DS the trust-region radius is unchanged, at most $K_{\mathrm{ds}}$ evaluations occur per DS iteration, and upon returning to TR the radius is reduced:
    \[
    \Delta_{k_1+1} = \gamma_{\mathrm{dec}}\Delta_{k_1} = \gamma_{\mathrm{dec}} \min(\gamma_{\mathrm{inc}}\Delta_{i_k},\Delta_{\max}),
    \qquad
    \|\mathbf x_{k_1} - \mathbf x_{k_1+1}\| \le \zeta \Delta_{k_1},
    \]
    where parameter $\zeta <\infty$. 
    Since $\Delta_{i_k}\to 0$ and $\gamma_{\mathrm{dec}}\in(0,1)$, it follows that $\Delta_{k_1+1}\to 0$. 
    \item \textbf{Continuing in TR.}  
    If the algorithm remains in the TR regime, then the radius can increase only at successful iterations, so for all $k\notin\mathcal{S}$:
    \[
    0 \le \Delta_k \le \min\{\gamma_{\mathrm{inc}}\Delta_{i_k},\,\Delta_{\max}\}.
    \]
    Since $\Delta_{i_k}\to 0$ for $i_k\in\mathcal{S}$, this implies $\Delta_k\to 0$ for all $k\notin\mathcal{S}$.
\end{itemize}
In both cases the trust-region radius converges to zero, hence $\Delta_k \to 0$ as $k\to\infty$.
\end{proof}    
\begin{lemma} \label{lemma:gk_fk_upper_bdd}
    Suppose Assumptions \ref{ass:smoothness} and \ref{ass:cauchy} hold.
    Then for any iteration $k\in \mathcal{C}^r \bigcup \mathcal{A} \bigcup \mathcal{U}$, i.e., $\Delta_k$ is decreased, the following holds 
    \begin{align} \label{eq:gk_upper_bdd}
        ||\mathbf{g}_k|| \leq C_0 \Delta_k \quad \text{and} \quad ||\nabla f(\mathbf{x}_k)||\leq (\kappa_{eg} +C_0) \Delta_k,
    \end{align}
    where $C_0=\max \left\{ \kappa_{bhm}, \frac{4\kappa_{ef}}{1-\eta_1}, \frac{1}{\beta} \right\}$.
\end{lemma}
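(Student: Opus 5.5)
Together with a triangle inequality, the bound follows from a case analysis on how the radius is decreased at iteration $k$. The proof splits into two parts: first $\|\mathbf{g}_k\|\le C_0\Delta_k$ on each of $\mathcal{C}^r$, $\mathcal{A}$, $\mathcal{U}$, then the transfer to $\nabla f(\mathbf{x}_k)$ through the fully linear error bound.

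\textbf{Case $k\in\mathcal{A}\cup\mathcal{U}$.} In both situations Algorithm~\ref{alg:alg_1} only reaches these branches when $Y_k$ is $\Lambda$-poised, so $m_k$ is fully linear on $\mathcal{B}(\mathbf{x}_k,\Delta_k)$. Moreover $\rho_k<\eta_1$, so iteration $k$ is not successful. I will argue by contraposition using the second item of Lemma~\ref{lemma:fullylinear_success}. If
\[
\Delta_k\le\min\left\{\frac{\|\mathbf{g}_k\|}{\kappa_{bhm}},\frac{(1-\eta_1)\|\mathbf{g}_k\|}{4\kappa_{ef}}\right\},
\]
then $k$ would be successful. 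Hence this inequality must fail, giving
\[
\|\mathbf{g}_k\|<\max\left\{\kappa_{bhm},\frac{4\kappa_{ef}}{1-\eta_1}\right\}\Delta_k\le C_0\Delta_k .
\]
Iterations that enter the DS switch are of type $\mathcal{U}$ at the moment of the switch, so they are covered by the same argument. The bound concerns $\Delta_k$ and $\mathbf{g}_k$ at the TR step, before any DS modification.

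\textbf{Case $k\in\mathcal{C}^r$.} Here Algorithm~\ref{alg:alg_2} is invoked with a reduction, and on exit the returned radius satisfies $\Delta_k=\max\{\Delta_0^i,\beta\|\mathbf{g}_k^i\|\}\ge\beta\|\mathbf{g}_k\|$, where $m_k=m_k^i$. Thus $\|\mathbf{g}_k\|\le\Delta_k/\beta\le C_0\Delta_k$. The model produced is fully linear on $\mathcal{B}(\mathbf{x}_k,\Delta_0^i)$, with $\Delta_0^i\le\Delta_k$ by construction, so it is also fully linear on the ball of radius $\Delta_k$, with the same constants up to the usual enlargement. The subtle point is the extra line $\Delta_k\gets\gamma_{\mathrm{dec}}\Delta_k$ after the call in Algorithm~\ref{alg:alg_1}. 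I will interpret $\Delta_k$ in the lemma as the radius at which the model and the TR step are used, namely the output of Algorithm~\ref{alg:alg_2}. Alternatively, if the post-reduction radius is the intended one, I would absorb the factor $1/\gamma_{\mathrm{dec}}$ into the constant, or reorder the step. I expect reconciling this bookkeeping with the stated $C_0$ to be the main obstacle; the cleanest approach is to treat criticality iterations via the radius returned by Algorithm~\ref{alg:alg_2}.

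\textbf{Transfer to the true gradient.} In every case $m_k$ is fully linear on $\mathcal{B}(\mathbf{x}_k,\Delta_k)$. Evaluating \eqref{eq:FL_error_gradient} at $\mathbf{s}=0$ gives $\|\nabla f(\mathbf{x}_k)-\mathbf{g}_k\|\le\kappa_{eg}\Delta_k$. The triangle inequality then yields
\[
\|\nabla f(\mathbf{x}_k)\|\le\|\nabla f(\mathbf{x}_k)-\mathbf{g}_k\|+\|\mathbf{g}_k\|\le(\kappa_{eg}+C_0)\Delta_k ,
\]
which completes \eqref{eq:gk_upper_bdd}.
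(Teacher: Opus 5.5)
Your treatment of $\mathcal{A}\cup\mathcal{U}$, including the iterations that trigger the DS switch, is the paper's argument. The paper re-derives the success test inline: assuming $\|\mathbf{g}_k\|>C_0\Delta_k$ yields $|\rho_k-1|\le 4\kappa_{ef}/C_0\le 1-\eta_1$. You instead invoke the second item of Lemma~\ref{lemma:fullylinear_success} contrapositively, which amounts to the same thing. The transfer to $\nabla f(\mathbf{x}_k)$ via \eqref{eq:FL_error_gradient} is identical.

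The gap is in the case $k\in\mathcal{C}^r$. You bound $\|\mathbf{g}_k\|$ against $\hat\Delta_k:=\max\{\Delta_0^i,\beta\|\mathbf{g}_k^i\|\}$, which is in general neither of the two natural radii:
\begin{itemize}
\item it is not the radius before the decrease, which is what $\Delta_k$ means in the $\mathcal{A}\cup\mathcal{U}$ cases, where $\Delta_{k+1}=\gamma_{\mathrm{dec}}\Delta_k$;
\item it is not the radius used afterwards, because Algorithm~\ref{alg:alg_1} applies $\Delta_k\gets\gamma_{\mathrm{dec}}\Delta_k$ \emph{before} solving \eqref{eq:subproblem_TR}. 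Your justification that $\hat\Delta_k$ is ``the radius at which the TR step is used'' is therefore wrong.
\end{itemize}
As a result, your argument never uses the hypothesis that the radius is decreased. Your fallbacks (the constant $1/(\gamma_{\mathrm{dec}}\beta)$, or reordering the algorithm) prove a different statement.

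The paper instead counts the criticality loop iteration by iteration and measures against the radius at the start of each such iteration.
\begin{itemize}
\item If the iteration is not the last one of Algorithm~\ref{alg:alg_2}, the loop continues only because $\Delta_k>\mu\|\mathbf{g}_k\|$. Since $\mu>\beta$, this gives $\|\mathbf{g}_k\|<\Delta_k/\mu<\Delta_k/\beta\le C_0\Delta_k$.
\item If it is the last one, the new radius is $\Delta_{k+1}=\max\{\gamma_{\mathrm{dec}}\Delta_k,\beta\|\mathbf{g}_{k+1}\|\}\ge\beta\|\mathbf{g}_{k+1}\|$. Membership $k\in\mathcal{C}^r$ forces $\Delta_{k+1}<\Delta_k$ (automatic in the branch $\Delta_{k+1}=\gamma_{\mathrm{dec}}\Delta_k$). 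Hence $\|\mathbf{g}_{k+1}\|<\Delta_k/\beta$. Because $\mathbf{x}_{k+1}=\mathbf{x}_k$ and $m_{k+1}$ is fully linear, $\|\nabla f(\mathbf{x}_k)\|\le\kappa_{eg}\Delta_{k+1}+\Delta_k/\beta\le(\kappa_{eg}+1/\beta)\Delta_k$.
\end{itemize}
Exploiting the actual decrease $\Delta_{k+1}<\Delta_k$ is the idea you are missing.

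Your worry about $\gamma_{\mathrm{dec}}$ is not empty, though. The paper's bookkeeping applies $\gamma_{\mathrm{dec}}$ only to the first argument of the $\max$. The printed composition of Algorithms~\ref{alg:alg_1} and~\ref{alg:alg_2} instead gives $\gamma_{\mathrm{dec}}\max\{\Delta_0^i,\beta\|\mathbf{g}_k^i\|\}$, for which the same device only yields $1/(\gamma_{\mathrm{dec}}\beta)$. The stated $C_0$ therefore rests on that update rule, not on a redefinition of which radius $\Delta_k$ denotes.
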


\begin{proof}
First, consider the case that any $k\in \mathcal{A} \bigcup \mathcal{U}$. This further can be sub-classified in two different cases for $\mathcal{U}$, i.e. when $k \in \mathcal{U}^{sw} \subset \mathcal{U}$ and when $k \in \mathcal{U} \backslash \mathcal{U}^{sw}$. Note that $\mathcal{U}^{sw}$ refers to an unsuccessful iteration that mandates a switch to DS.  
\begin{itemize}
\item Case 1: When $k \in \mathcal{U} \backslash\mathcal{U}^{sw}$.      
We assume that $||\mathbf{g}_k|| > C_0 \Delta_k$, where $C_0=\max \left\{ \kappa_{bhm}, \frac{4\kappa_{ef}}{1-\eta_1}, \frac{1}{\beta}  \right\}$. By Assumption \ref{ass:smoothness}, 
$\frac{||\mathbf{g}_k||}{||H_k||} \geq \frac{||\mathbf{g}_k||}{\kappa_{bhm}} \geq \frac{C_0 \Delta_k}{\kappa_{bhm}} \geq \Delta_k$. This further leads to the following. 
\begin{align*}
    &m_k(\mathbf{x}_k) - m_k(\mathbf{x}_k + \mathbf{s}_k) \geq \frac{1}{2} ||\mathbf{g}_k|| \min \left\{ \Delta_k, \frac{\|\mathbf{g}_k\|}{\|H_k\|}\right\} \geq \frac{1}{2} C_0\Delta_k^2, \\
      \text{and} \quad  &|\rho_k -1|  \leq \left| \frac{f(\mathbf{x}_k) - m_k(\mathbf{x}_k)}{m_k(\mathbf{x}_k) - m_k(\mathbf{x}_k + \mathbf{s}_k)} \right| + \left| \frac{f(\mathbf{x}_k + \mathbf{s}_k) - m_k(\mathbf{x}_k + \mathbf{s}_k)}{m_k(\mathbf{x}_k) - m_k(\mathbf{x}_k + \mathbf{s}_k)} \right| \\
      &\hspace{11mm} \leq \left| \frac{2\kappa_{ef} \Delta_k^2}{C_0 \Delta_k^2} \right| + \left| \frac{2\kappa_{ef} \Delta_k^2}{C_0 \Delta_k^2} \right| = \frac{4\kappa_{ef}}{C_0} \leq 1-\eta_1  \quad \quad \quad \text{(since $C_0 \geq \frac{4\kappa_{ef}}{1-\eta_1}$)}
    \end{align*}
Then we have $\rho_k>\eta_1$. It means that iteration $k$ is successful, i.e., $k\in\mathcal{S}$, which is a contradiction of $k\in \mathcal{A} \bigcup \mathcal{U}$. Thus, it has $||\mathbf{g}_k|| \leq C_0 \Delta_k$. Moreover, 
    \begin{align*}
        ||\nabla f(\mathbf{x}_k)|| &\leq ||\nabla f(\mathbf{x}_k)-\mathbf{g}_k|| + ||\mathbf{g}_k|| \\
        &\leq \kappa_{eg} \Delta_k + C_0 \Delta_k = (\kappa_{eg} + C_0)\Delta_k. \quad \quad \quad \text{(by (\ref{eq:FL_error_gradient}))}
    \end{align*}
    Next, suppose $k\in\mathcal{C}^r$. If $k$ is not the last iteration in a series of iterations in Algorithm \ref{alg:alg_2}, which implies $||\mathbf{g}_k||<\frac{\Delta_k}{\mu}$, then
    \begin{align*}
        ||\nabla f(\mathbf{x}_k)|| &\leq ||\nabla f(\mathbf{x}_{k})-\mathbf{g}_{k}|| + ||\mathbf{g}_{k}|| \leq \kappa_{eg} \Delta_{k} + \frac{\Delta_k}{\mu} = (\kappa_{eg} + \frac{1}{\mu})\Delta_k.
    \end{align*}
    If $k$ is the last iteration in a series of iterations in Algorithm \ref{alg:alg_2}, based on the setting $\Delta_{k+1}=\max \{ \gamma_{dec} \Delta_k, \beta ||\mathbf{g}_{k+1}||\}$, there are two possible cases: $\Delta_{k+1}=\gamma_{dec} \Delta_k$ or $\Delta_{k+1}=\beta ||\mathbf{g}_{k+1}||$. In the former case, it is easy to have
    \begin{align*}
        ||\nabla f(\mathbf{x}_k)||=||\nabla f(\mathbf{x}_{k+1})|| &\leq ||\nabla f(\mathbf{x}_{k+1})-\mathbf{g}_{k+1}|| + ||\mathbf{g}_{k+1}|| \\
        &\leq \kappa_{eg} \Delta_{k+1} + \frac{\gamma_{dec} \Delta_{k}}{\beta} 
        = (\kappa_{eg} + \frac{1}{\beta})\gamma_{dec} \Delta_k < (\kappa_{eg} + \frac{1}{\beta})\Delta_k.
    \end{align*}
    In the latter one, since both $k$ and $k+1\in \mathcal{C}^r \bigcup \mathcal{A} \bigcup \mathcal{U}$, we obtain $\Delta_{k+1}=\beta ||\mathbf{g}_{k+1}||<\Delta_k$. Then
    \begin{align*}
        ||\nabla f(\mathbf{x}_k)||=||\nabla f(\mathbf{x}_{k+1})|| &\leq ||\nabla f(\mathbf{x}_{k+1})-\mathbf{g}_{k+1}|| + ||\mathbf{g}_{k+1}|| \\
        &\leq \kappa_{eg} \Delta_{k+1} + \frac{\Delta_{k}}{\beta} < (\kappa_{eg} + \frac{1}{\beta})\Delta_k.
    \end{align*}
\item Case 2: When $k \in \mathcal{U}^{sw}$. 
The trust-region step is rejected, so $\mathbf x_{k+1} = \mathbf x_k$ and
$\Delta_{k+1}=\gamma_{\mathrm{dec}}\Delta_k$ with
$\gamma_{\mathrm{dec}}\in(0,1)$.  Since the ratio $\rho_k$ is computed
solely from the Cauchy step for $m_k$, the same contradiction argument
as in the case $k\in\mathcal A\cup(\mathcal U \backslash \mathcal U^{sw})$ applies verbatim: assuming $\|\mathbf{g}_k\|>C_0\Delta_k$ would imply
$\rho_k\ge \eta_1$, contradicting $k\in\mathcal U^{sw}\subset
\mathcal U$.  Hence
\[
\|\mathbf{g}_k\|\le C_0\Delta_k,
\qquad
\|\nabla f(\mathbf{x}_k)\|\le (\kappa_{eg}+C_0)\Delta_k .
\]

After this unsuccessful TR iteration, a DS step is attempted from
$\mathbf x_{k+1}=\mathbf x_k$. If DS succeeds and 
let $\mathbf s_{k+1}^{\mathrm{DS}}$ be the distance from $\mathbf x_{k+1}$ to the new point $\mathbf x_{k+2}$, 
then it generates $\mathbf x_{k+2} = \mathbf x_{k+1}+ \mathbf s_{k+1}^{\mathrm{DS}}$ while keeping the radius unchanged,
$\Delta_{k+2}=\Delta_{k+1}$. The geometry correction then ensures that
$Y_{k+2}\supseteq Y_k\cup\{\mathbf x_{k+2}\}$ is $\Lambda$-poised in
$B(\mathbf x_{k+2},\Delta_{k+2})$, so $m_{k+2}$ is fully linear on this ball
with the same constants $\kappa_{ef},\kappa_{eg},\kappa_{bhm}$.

At iteration $k+2$, a Cauchy step $\mathbf s_{k+2}$ in (\ref{eq:subproblem_TR}) is computed for
$m_{k+2}$. Whenever the radius is reduced at iteration $k+2$ (i.e.\ 
$k+2\in\mathcal C^r\cup\mathcal A\cup\mathcal U$), applying the same
contradiction argument to $m_{k+2}$ yields
\[
\|\mathbf g_{k+2}\|\le C_0\Delta_{k+2},
\qquad
\|\nabla f(\mathbf x_{k+2})\|\le (\kappa_{eg}+C_0)\Delta_{k+2}.
\]

For clarity, the associated radii, iterates, and step bounds 
for switching from DS to TR 
are
\[
\Delta_{k+1}=\gamma_{\mathrm{dec}}\Delta_k,\qquad
\Delta_{k+2}=\Delta_{k+1},
\]
\[
\mathbf x_{k+1}= \mathbf x_k,\qquad
\mathbf x_{k+2}=\mathbf x_{k+1}+ \mathbf s_{k+1}^{\mathrm{DS}},\qquad
\mathbf x_{k+3}=\mathbf x_{k+2}+ \mathbf s_{k+2},
\]
\[
\|\mathbf x_{k+2}-\mathbf x_{k+1}\| = \| \mathbf s_{k+1}^{\mathrm{DS}}\| \;\le\;\delta_{k+1}^0\,\vartheta
=\vartheta \Delta_{k+1}
=\vartheta \gamma_{\mathrm{dec}} \Delta_k,
\]
\[
f(\mathbf x_{k+3})\le f(\mathbf x_{k+2})< f(\mathbf x_{k+1})=f(\mathbf x_k),
\]
where $K_{\mathrm{ds}}$ is at most the number of evaluations performed per DS iteration, and parameter $\vartheta<\infty$. 
Since DS does not modify the trust-region radius and the number of
search/poll steps is uniformly bounded by $K_{\mathrm{ds}}$, the above shows
that the iterates remain within $O(\Delta_{k+1})$ of $\mathbf x_k$, and the
fully linear model property is preserved.  Hence $\|\mathbf{g}_{k+i}\|\le
C_0\Delta_{k+i}$ whenever the radius is reduced, and therefore
\eqref{eq:gk_upper_bdd} holds in all cases.
\end{itemize}
\end{proof}

\begin{lemma}
    Suppose Assumptions \ref{ass:smoothness} 
    and \ref{ass:cauchy} hold. Then 
    \begin{align*}
        \liminf_{k\to +\infty} ||\nabla f(\mathbf{x}_k)||=0.
    \end{align*}
\end{lemma}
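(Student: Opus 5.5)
The plan is to argue by contradiction, in the standard way for trust-region methods with fully linear models (cf.\ Theorem~10.12 in~\cite{conn2009introduction}). Throughout, we lean on Lemma~\ref{lemma:TR_radius} and Lemma~\ref{lemma:fullylinear_success}.

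First we dispose of the case of finitely many successful iterations. If $|\mathcal{S}|<\infty$, Lemma~\ref{lemma:finite_successful_iter} gives $\lim_{k}\|\nabla f(\mathbf{x}_k)\|=0$, which is stronger than the claim. So we may assume $|\mathcal{S}|=\infty$. Suppose, to the contrary, that there exist $\epsilon>0$ and $\bar k$ with $\|\nabla f(\mathbf{x}_k)\|\ge\epsilon$ for all $k\ge \bar k$. By the third item of Lemma~\ref{lemma:fullylinear_success}, this yields a uniform lower bound on the model gradients:
\[
\|\mathbf{g}_k\|\ge \epsilon_g:=\min\left\{\epsilon_{\mathrm{low}},\tfrac{\epsilon}{\kappa_{eg}\mu+1}\right\}>0 \quad\text{for all } k\ge\bar k .
\]

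Next we combine this bound with Lemma~\ref{lemma:TR_radius}, which gives $\Delta_k\to0$. Hence there is $k_1\ge\bar k$ such that
\[
\Delta_k<\min\left\{\tfrac{\epsilon_g}{\kappa_{bhm}},\tfrac{\epsilon_g(1-\eta_1)}{4\kappa_{ef}},\tfrac{\epsilon_g}{C_0}\right\}\quad\text{for all } k\ge k_1 .
\]
For $k\ge k_1$ we then have $\|\mathbf{g}_k\|\ge\epsilon_g>C_0\Delta_k$. By Lemma~\ref{lemma:gk_fk_upper_bdd}, no such $k$ can belong to $\mathcal{C}^r\cup\mathcal{A}\cup\mathcal{U}$; this covers the switching iterations $\mathcal{U}^{sw}$ and the radius reduction performed on return from DS, which that lemma treats as well. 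In other words, for $k\ge k_1$ the radius is never decreased. Model-improving steps keep $\Delta_k$ unchanged, and by Lemma~\ref{lemma:fullylinear_success} together with the finiteness of model improvement, only finitely many consecutive $\mathcal{MI}$ iterations can occur before a fully linear model is reached. Once the model is fully linear and $\Delta_k$ satisfies the bound above, the second item of Lemma~\ref{lemma:fullylinear_success} says the iteration is successful, so $\Delta_{k+1}=\min\{\gamma_{\mathrm{inc}}\Delta_k,\Delta_{\max}\}>\Delta_k$. Consequently $\Delta_k$ is nondecreasing for $k\ge k_1$ and strictly increases infinitely often. Then $\Delta_k\ge\Delta_{k_1}>0$ for all large $k$, which contradicts $\Delta_k\to0$. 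Therefore $\liminf_{k}\|\nabla f(\mathbf{x}_k)\|=0$.

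The main obstacle is making sure that the DS phase cannot shrink the radius while $\|\mathbf{g}_k\|$ stays bounded away from zero. The algorithm enters DS only after an unsuccessful, fully linear TR iteration, and it applies $\Delta_{k+1}=\gamma_{\mathrm{dec}}\Delta_k$ on return. The first thing to try is to invoke Case~2 of Lemma~\ref{lemma:gk_fk_upper_bdd}: any iteration in $\mathcal{U}^{sw}$ forces $\|\mathbf{g}_k\|\le C_0\Delta_k$, which is impossible once $\Delta_k$ is small. Hence no switch happens after $k_1$. Since at most $T^{ds}_{\max}$ switches occur in total, one can alternatively simply take $k_1$ beyond the last switch, which gives a second, cruder way to exclude DS-induced radius reductions.
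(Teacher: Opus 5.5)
Your proof is correct and is essentially the paper's argument in contrapositive form. Both combine $\Delta_k\to 0$ (Lemma~\ref{lemma:TR_radius}) with Lemma~\ref{lemma:gk_fk_upper_bdd}; the paper argues directly that the radius must be decreased along an infinite subsequence $\{k_i\}\subset\mathcal{C}^r\cup\mathcal{A}\cup\mathcal{U}$, on which $\|\nabla f(\mathbf{x}_{k_i})\|\le(\kappa_{eg}+C_0)\Delta_{k_i}\to 0$. Using that gradient bound directly would let you drop several unnecessary steps. These are the detour through model gradients and the third item of Lemma~\ref{lemma:fullylinear_success}, the separate $|\mathcal{S}|<\infty$ case, and the discussion of model-improving and successful steps, since ``the radius is never decreased after $k_1$'' already contradicts $\Delta_k\to 0$.
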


\begin{proof}
    It follows from Lemma \ref{lemma:TR_radius} that there exists an infinite subsequence of iterations $\{k_i\} \in \mathcal{C}^r \bigcup \mathcal{A} \bigcup \mathcal{U}$ such that $\Delta_{k_i}$ is decreased. Then according to Lemma \ref{lemma:gk_fk_upper_bdd}, for any sufficiently large $k_i$ we have $\lim_{k_i \to +\infty} ||\nabla f(\mathbf{x}_{k_i})||= \liminf_{k\to +\infty} ||\nabla f(\mathbf{x}_k)||=0$.
\end{proof}

\begin{theorem}
    Suppose Assumptions \ref{ass:smoothness} 
    and \ref{ass:cauchy} hold. Then 
    \begin{align*}
        \lim_{k\to +\infty} ||\nabla f(\mathbf{x}_k)||=0.
    \end{align*}
\end{theorem}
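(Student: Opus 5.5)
The plan is to adapt the standard $\liminf$-to-$\lim$ argument for derivative-free trust-region methods (Theorem 10.13 in~\cite{conn2009introduction}) to the switching setting. The only new element is the finitely many DS phases, and these are handled using the monotone decrease of accepted iterates already noted in the proof of Lemma~\ref{lemma:TR_radius}. If the set $\mathcal{S}$ of successful iterations is finite, the claim is exactly Lemma~\ref{lemma:finite_successful_iter}. So we assume $|\mathcal{S}|=\infty$ and argue by contradiction. Suppose there are $\epsilon>0$ and an infinite subsequence $\{t_i\}$ with $\|\nabla f(\mathbf{x}_{t_i})\|\ge 2\epsilon$; since only finitely many switches occur, we may take the $t_i$ to be TR iterations. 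By the preceding lemma ($\liminf=0$), for each $t_i$ there is a first index $\ell_i>t_i$ with $\|\nabla f(\mathbf{x}_{\ell_i})\|<\epsilon$. Hence $\|\nabla f(\mathbf{x}_k)\|\ge\epsilon$ for all $t_i\le k<\ell_i$.

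\textbf{Step 1 (a lower bound on model gradients).} For $k\in[t_i,\ell_i)$, Lemma~\ref{lemma:fullylinear_success} gives $\|\mathbf{g}_k\|\ge \epsilon_g:=\min\{\epsilon_{\mathrm{low}},\epsilon/(\kappa_{eg}\mu+1)\}>0$. By Lemma~\ref{lemma:TR_radius}, $\Delta_k\to0$, so for large $i$ we have $\Delta_k\le \epsilon_g/C_0$ on these windows. The criticality step then no longer reduces the radius.

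\textbf{Step 2 (decrease per unit of displacement).} On successful iterations in the window, combining $\rho_k\ge\eta_1$ with the Cauchy bound~\eqref{eq:cauchy_step}, $\|\mathbf{g}_k\|\ge\epsilon_g$, $\|H_k\|\le\kappa_{\mathrm{bhm}}$ and $\Delta_k$ small gives
$f(\mathbf{x}_k)-f(\mathbf{x}_{k+1})\ge \tfrac{\eta_1\epsilon_g}{2}\Delta_k\ge \tfrac{\eta_1\epsilon_g}{2}\|\mathbf{x}_{k+1}-\mathbf{x}_k\|$.
On acceptable iterations the step is also accepted. There we use $\eta_0$ in place of $\eta_1$ and get the same inequality with a smaller constant. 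On unsuccessful, model-improving and criticality iterations, $\mathbf{x}_{k+1}=\mathbf{x}_k$, so they contribute nothing.

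The remaining case is a DS phase starting inside a window. Its total displacement is at most $\vartheta\Delta_k$ (the bound used in Lemma~\ref{lemma:gk_fk_upper_bdd}), and it does not increase $f$. Since at most $T^{ds}_{\max}$ switches occur overall, for $i$ large enough no window contains a DS phase at all.

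Summing over the window yields
\[
\|\mathbf{x}_{t_i}-\mathbf{x}_{\ell_i}\|\le \tfrac{2}{\eta_0\epsilon_g}\bigl(f(\mathbf{x}_{t_i})-f(\mathbf{x}_{\ell_i})\bigr).
\]

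\textbf{Step 3 (conclusion).} The sequence $f(\mathbf{x}_k)$ is monotonically nonincreasing and $f$ is bounded on $\Omega$ (Assumption~\ref{ass:smoothness}), so $f(\mathbf{x}_k)$ converges. Hence the right-hand side above tends to $0$, and $\|\mathbf{x}_{t_i}-\mathbf{x}_{\ell_i}\|\to0$. The Lipschitz continuity of $\nabla f$ then gives $\|\nabla f(\mathbf{x}_{t_i})-\nabla f(\mathbf{x}_{\ell_i})\|\to0$. This contradicts $\|\nabla f(\mathbf{x}_{t_i})\|-\|\nabla f(\mathbf{x}_{\ell_i})\|\ge\epsilon$.

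\textbf{Main obstacle.} The delicate step is Step 2. Any DS phase or criticality loop that falls inside a window must not move the iterate without a proportional decrease in $f$. For DS we rely on two facts: finiteness of switches ($T^{ds}_{\max}<\infty$) eventually removes DS from the tail, and DS moves are bounded by $\vartheta\Delta_k$ with no increase in $f$. The acceptable steps also require the fully linear (poisedness) condition, which the algorithm imposes on them, so that Lemma~\ref{lemma:fullylinear_success} applies there.
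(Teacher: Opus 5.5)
Your proof is correct and has the same skeleton as the paper's. The case $|\mathcal{S}|<\infty$ goes to Lemma~\ref{lemma:finite_successful_iter}. Otherwise you reach a contradiction on windows whose displacement is bounded by the $f$-decrease over accepted steps, and this decrease vanishes because $f(\mathbf{x}_k)$ converges.

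The difference is in how the windows are defined and closed.
\begin{itemize}
\item The paper starts its windows at successful iterations $k_i$. It ends them at the first $j_i$ with $\|\mathbf{g}_{j_i}\|<\epsilon$, a model-gradient threshold whose existence comes from Lemma~\ref{lemma:gk_fk_upper_bdd}. It closes with $\|\nabla f(\mathbf{x}_{k_i})\|\le L_{\nabla f}\|\mathbf{x}_{k_i}-\mathbf{x}_{j_i}\|+\kappa_{eg}\Delta_{j_i}+\|\mathbf{g}_{j_i}\|$ together with $\Delta_{j_i}<\mu\|\mathbf{g}_{j_i}\|$. This needs $m_{j_i}$ to be fully linear and the criticality step to have acted at $j_i$.
\item You put the thresholds $2\epsilon$ and $\epsilon$ on the true gradient and take $\ell_i$ from the $\liminf$ lemma. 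You get the model-gradient lower bound inside the window from the third item of Lemma~\ref{lemma:fullylinear_success}. As a result, the endpoint comparison needs only Lipschitz continuity of $\nabla f$, and the $t_i$ need not lie in $\mathcal{S}$.
\end{itemize}

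You also dispose explicitly of acceptable iterations (via $\eta_0$) and of DS phases, which cannot appear in late windows because only finitely many switches occur. The paper merely asserts that late window iterations are successful or model-improving and says nothing about DS.

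The paper's route stays closer to the Conn--Scheinberg--Vicente template and works entirely in model-gradient terms. Yours is more self-contained and handles the extra iteration types of the hybrid algorithm more cleanly. Your $\vartheta\Delta_k$ bound on DS moves and your remark about the criticality step are not needed: once the tail is DS-free, non-moving iterations contribute nothing.
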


\begin{proof}
    If $|\mathcal{S}|<+\infty$, then the result is shown in Lemma \ref{lemma:finite_successful_iter}. Hence, we consider $|\mathcal{S}|=+\infty$. By contradiction, assume there exists a subsequence $\{ k_i \} \in \mathcal{S}$ and some $\epsilon_0>0$, such that 
    \begin{align} \label{eq:gradient_fx_assumption}
        ||\nabla f(\mathbf{x}_{k_i})|| \geq  \epsilon_0 >0.
    \end{align}
    Then by Lemma~\ref{lemma:fullylinear_success}, $||\mathbf{g}_{k_i}||\geq \epsilon>0$ for some $\epsilon$. Without loss of generality, we suppose that
    \begin{align*}
        \epsilon <\min \left\{ \epsilon_{low}, \frac{\epsilon_0}{2+\kappa_{eg}\mu} \right\}.
    \end{align*}
Let $j_i > k_i$, where $j_i \in \mathcal{C}^r \cup \mathcal{A} \cup \mathcal{U}$, be the first iteration such that $\|\mathbf{g}_{j_i}\| < \epsilon$,
which must exist from Lemma~\ref{lemma:gk_fk_upper_bdd}. In other words, there exists a subsequence $\{k_i\}$ such that
\[
\|\mathbf{g}_n\| \ge \epsilon \quad \text{for } k_i \le n < j_i, 
\qquad \text{and} \qquad 
\|\mathbf{g}_{j_i}\| < \epsilon .
\]

Consider the set 
\[
\mathcal{K} := \bigcup_{i\ge 0} \{ n \in \mathbb{N}_0 : k_i \le n < j_i \}.
\]
For each iteration $n \in \mathcal{K} \cap \mathcal{S}$, it holds that
\begin{align}\label{eq:fx_decrease}
f(\mathbf{x}_n) - f(\mathbf{x}_{n+1}) 
&\ge \eta_1 \left( m_n(\mathbf{x}_n) - m_n(\mathbf{x}_n + \mathbf{s}_n) \right) \nonumber \\
&\ge \frac{\eta_1}{2} \|\mathbf{g}_n\| 
\min \left\{ \Delta_n, \frac{\|\mathbf{g}_n\|}{\|H_n\|} \right\} \nonumber \\
&\ge \frac{\eta_1 \epsilon}{2} 
\min \left\{ \Delta_n, \frac{\|\mathbf{g}_n\|}{\|H_n\|} \right\} > 0 .
\end{align}
Thus, for any $n \in \mathcal{K} \cap \mathcal{S}$ sufficiently large, we have
\[
\Delta_n \le \frac{2\big(f(\mathbf{x}_n) - f(\mathbf{x}_{n+1})\big)}{\eta_1 \epsilon}.
\]
Since $\|\mathbf{g}_n\| \ge \epsilon$ by assumption and $\Delta_n \to 0$ by Lemma~\ref{lemma:TR_radius}, Lemma~\ref{lemma:finite_successful_iter} implies that for all sufficiently large $n \in \mathcal{K}$, either $n \in \mathcal{S}$ with $m_n$ fully linear, or $n \in \mathcal{MI}$. The iterates $\mathbf{x}_n$ are updated during successful steps and remain unchanged during model-improving steps. Therefore, for all sufficiently large $i$, we obtain
\begin{align}\label{eq:xk_upper_bdd}
\|\mathbf{x}_{k_i} - \mathbf{x}_{j_i}\|
&\le \sum_{\substack{n = k_i \\ n \in \mathcal{K} \cap \mathcal{S}}}^{j_i - 1}
\|\mathbf{x}_n - \mathbf{x}_{n+1}\| \le \sum_{\substack{n = k_i \\ n \in \mathcal{K} \cap \mathcal{S}}}^{j_i - 1}
\Delta_n \le \sum_{\substack{n = k_i \\ n \in \mathcal{K} \cap \mathcal{S}}}^{j_i - 1}
\frac{2\big(f(\mathbf{x}_n) - f(\mathbf{x}_{n+1})\big)}{\eta_1 \epsilon}. 
\end{align}
This further implies that
\begin{align}
\|\mathbf{x}_{k_i} - \mathbf{x}_{j_i}\| = \frac{2\left(f(\mathbf{x}_{k_i}) - f(\mathbf{x}_{j_i})\right)}{\eta_1 \epsilon}.
\end{align}
By Assumption~\ref{ass:smoothness} and \eqref{eq:fx_decrease}, the sequence 
$\{f(\mathbf{x}_n) : n \in \mathcal{K}\}$ is bounded and monotone decreasing. Hence, the right-hand side of \eqref{eq:xk_upper_bdd} converges to zero, i.e., $\left\{ f(\mathbf{x}_{k_i}) - f(\mathbf{x}_{j_i}) \right\} \to 0.$
Consequently,
\[
\lim_{i \to \infty} \|\mathbf{x}_{k_i} - \mathbf{x}_{j_i}\| = 0 .
\]  
Consider
\begin{align*}
        ||\nabla f(\mathbf{x}_{k_i})|| &\leq ||\nabla f(\mathbf{x}_{k_i}) -\nabla f(\mathbf{x}_{j_i})|| + ||\nabla f(\mathbf{x}_{j_i})-\mathbf{g}_{j_i}|| + ||\mathbf{g}_{j_i}|| \\
        &\leq L_{\nabla f} ||\mathbf{x}_{k_i}-\mathbf{x}_{j_i}|| + \kappa_{eg}\Delta_{j_i} + \epsilon \\
        &\leq \epsilon + \kappa_{eg}\mu \epsilon + \epsilon \hspace{15mm} \text{(by $\Delta_{j_i}<\mu||\mathbf{g}_{j_i}|| < \mu\epsilon$)} \\
        &= (2+\kappa_{eg}\mu)\epsilon <\epsilon_0,
    \end{align*}
    which contradicts (\ref{eq:gradient_fx_assumption}). This emphasizes our claim and completes the proof. 
\end{proof}

\section{Numerical Evaluation}
\label{sec:numeric}

This section presents a comprehensive evaluation of the numerical performance of several optimization algorithms on both classical simulation-based benchmarks and representative machine learning tasks. We compare two broad categories of methods: conventional state-of-the-art derivative-free solvers and our proposed hybrid optimization schemes. The baseline solvers include a trust-region method implemented via Py-BOBYQA~\cite{cartis2019improving, cartis2022escaping}, a Bayesian optimization method based on the Efficient Global Optimization framework~\cite{jones1998efficient} available in the Surrogate Model Toolbox, and a direct search method implemented using the Mesh Adaptive Direct Search algorithm~\cite{audet2006mesh}. These will be referred to as basic TR, basic BO, and basic DS, respectively.

From the perspective of our proposed methodologies, we examine two hybrid strategies: Trust Region combined with Direct Search (TR-DS, Algorithm~\ref{alg:alg_1}) and Bayesian Optimization combined with Direct Search (BO-DS, Algorithm~\cite{lion24-aswin}). All experiments were implemented in Python $3.7$, and executed on a workstation with an NVIDIA RTX $3080$ GPU and an Intel Core i5-12600K processor, ensuring computational consistency across all problem classes. This section is organized into two subsections, each corresponding to one of the problem classes under investigation.


\subsection{CUTEr DFO Set}
The CUTEr testbed~\cite{more2009benchmarking} provides a diverse suite of constrained and unconstrained optimization problems, varying in both complexity and dimensionality, making it well-suited for evaluating the robustness of our algorithms. We select $13$ benchmark problems from this suite to ensure a broad spectrum of challenges. As summarized in Table~\ref{table:num_dim_cuter_problems}, the problem dimensions range from $5$ to $30$, with a median dimensionality of $19$.

\begin{table}[ht]
\centering
\caption{Dimensions of the benchmark problems.}
\label{table:num_dim_cuter_problems}
\begin{tabular}{|c|c|c|c|c|c|c|}
\hline
Number of dimensions & 5 & 8 & 15 & 20 & 25 & 30  \\
\hline
Number of problems & 1 & 1 & 3 & 4 & 2 & 2 \\
\hline
\end{tabular}
\end{table}


\subsubsection{Metrics for benchmarking}
Since the problems in our consideration are simulation-based, standard termination criteria used in gradient-based methods may not be directly applicable. Instead, we adopt the well-established performance profiles proposed in~\cite{more2009benchmarking} for our analysis. 

Performance profiles are estimated on top of some convergence criteria. The convergence criteria can pertain to function value, computational time, etc. We also note that performance profiles apply to problems with gradients. For example, if defined with respect to objective function values, they take the following form. 
\begin{align} \label{eq:convergence_test}
    f(x) \leq f_L +\tau \left(f(x_0) - f_L \right),
\end{align}
Note that $\tau \in (0,1]$ is a tolerance that can be specified by users. This method gives a direct quantitative measure of the progress made by any solver relative to the best-known solution. By choosing an appropriate $\tau$, the accuracy level can be adjusted to satisfy different application requirements. 

We employ two well-known metrics, i.e., performance profiles and data profiles, to assess and compare the performance of various algorithms in the benchmark problems.


\textbf{Performance profiles:} Dolan \textit{et al.}~\cite{dolan2002benchmarking} introduced \emph{performance profiles} as a reliable and informative method for comparing the efficiency of multiple solvers across a set of benchmark problems. This approach addresses the inherent difficulty in evaluating solver performance, particularly when no single solver consistently dominates across all test cases.

Given a set of benchmark problems $\mathcal{P}$ and a collection of solvers $\mathcal{S}$, the \emph{performance ratio} $r_{p,s}$ for solver $s \in \mathcal{S}$ on problem $p \in \mathcal{P}$ is defined as:
\begin{align*}
    r_{p,s} = \frac{t_{p,s}}{\min \{ t_{p,s'} : s' \in \mathcal{S} \}},
\end{align*}
where $t_{p,s}$ denotes the computational cost (e.g., runtime, number of iterations, or function evaluations) incurred by solver $s$ on problem $p$. By definition, the best-performing solver for each problem has a ratio of $r_{p,s} = 1$, while larger values of $r_{p,s}$ indicate poorer relative performance.

The \emph{performance profile} $\varrho_s(\alpha)$ of the solver $s$ is given by:
\begin{align} \label{eq:performance_profiles}
    \varrho_s(\alpha) = \frac{1}{|\mathcal{P}|} \left| \left\{ p \in \mathcal{P} : r_{p,s} \leq \alpha \right\} \right|,
\end{align}
where $|\mathcal{P}|$ denotes the cardinality of the problem set. The function $\varrho_s(\alpha)$ represents the fraction of problems for which the solver $s$ performs within a factor $\alpha \in \mathbb{R}$ of the best solver. In particular, $\varrho_s(1)$ measures the proportion of problems where the solver $s$ achieves the best performance. A higher value of $\varrho_s(\alpha)$ reflects superior performance across a broader range of problems.

\textbf{Data Profiles.} 
As an alternative benchmarking approach, Móré and Wild~\cite{more2009benchmarking} proposed \emph{data profiles}, which evaluate a solver’s ability to solve problems within a prescribed computational budget. Unlike raw timing measures, data profiles explicitly account for problem dimensionality.

To define this, consider the \emph{unit cost} for the solver $s$ on problem $p$:
\begin{align*}
    \text{ucost}_{p,s} := \frac{t_{p,s}}{n_p + 1},
\end{align*}
where $n_p$ denotes the number of variables (i.e., the dimensionality) of problem $p$. The \emph{data profile} $dp_s(\alpha)$ is then defined as
\begin{align} \label{eq:data_profiles}
    dp_s(\alpha) = \frac{1}{|\mathcal{P}|} \left| \left\{ p \in \mathcal{P} : \text{ucost}_{p,s} \leq \alpha \right\} \right|.
\end{align}

By normalizing the computational cost with respect to problem size, data profiles allow fairer comparisons between problems of different scales. Similar to performance profiles, a higher value of $dp_s(\alpha)$ indicates that the solver $s$ is effective across a larger portion of the problem set under a given budget.

\subsubsection{Results for benchmarking problems}
In our experiments, the quantity $t_{p,s}$ in equations~\eqref{eq:performance_profiles} and~\eqref{eq:data_profiles} consistently refers to the computational time.

Figure~\ref{fig:performance_profile} presents the performance profiles of the solvers evaluated at three convergence tolerances, namely $\tau \in \{10^{-1}, 10^{-3}, 10^{-5}\}$. A smaller value of $\tau$ corresponds to a stricter convergence criterion. Overall, the TR-DS and BO-DS solvers demonstrate strong and consistent performance across these settings.

\begin{figure*}[htbp] 
  \centering
  \begin{subfigure}{.325\textwidth}
    \centering
    \includegraphics[width=\linewidth]{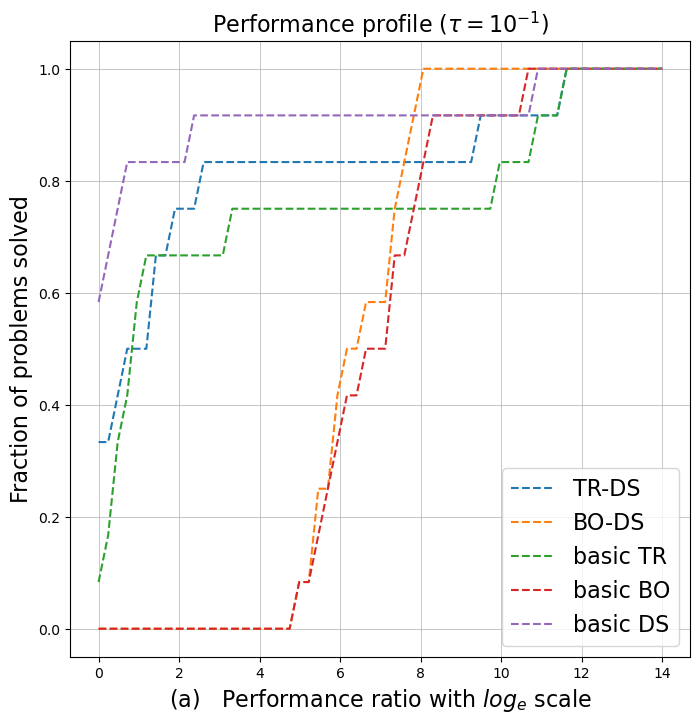}
  \end{subfigure}%
\begin{subfigure}{.325\textwidth}
    \centering
    \includegraphics[width=\linewidth]{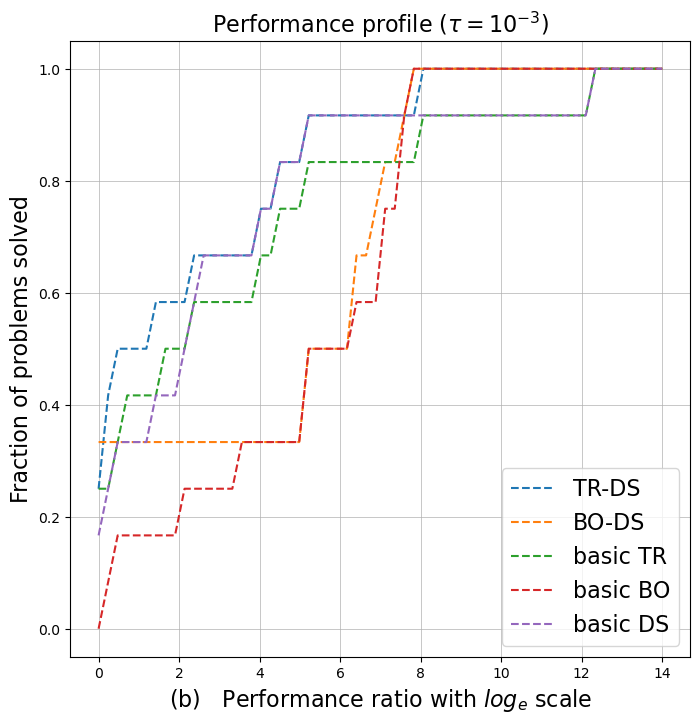}
  \end{subfigure}
  \begin{subfigure}{.325\textwidth}
    \centering
    \includegraphics[width=\linewidth]{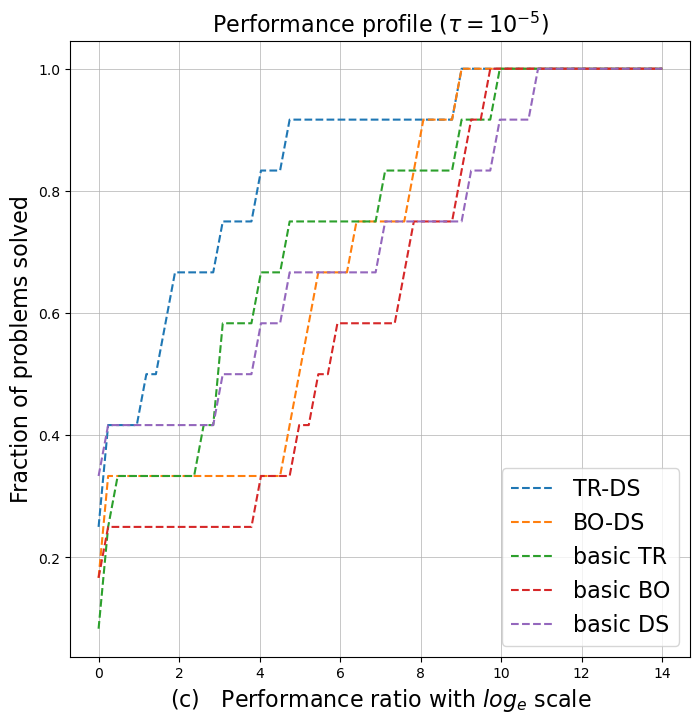}
  \end{subfigure}  
  \caption{Performance profiles with different choices of $\tau$ comparing the hybrid solvers (TR-DS and BO-DS) against baseline methods (basic TR, BO, and DS) on CUTEr benchmark set. 
  Note that TR-DS and BO-DS show consistently high performance across all tolerance levels, solving a higher fraction of problems at lower performance ratios compared to basic TR and BO methods.}
  \label{fig:performance_profile}
\end{figure*}

For $\tau = 10^{-1}$, the TR-DS and basic DS solvers perform particularly well, solving approximately $81\%$ and $90\%$ of the problems, respectively, within a performance ratio of $r_{p,s} = 3$. Although the BO-DS and basic BO solvers initially lag behind, their performance improves significantly as the allowed performance ratio increases (i.e., for $r_{p,s} \geq 6$). Notably, BO-DS is the first solver to reach full coverage, solving $100\%$ of the test problems.

When the tolerance is tightened to $\tau = 10^{-3}$, all solver curves shift to the right, reflecting the increased difficulty of achieving convergence. Nonetheless, TR-DS and basic DS remain strong performers at lower performance ratios. While BO-DS and basic BO underperform initially, they once again surpass the others at higher ratios, confirming their effectiveness when given sufficient computational leeway.

At the most stringent setting, $\tau = 10^{-5}$, nearly all solvers reach full problem coverage by $r_{p,s} = 10$. TR-DS notably outperforms the other methods across almost the entire range of performance ratios. Moreover, both TR-DS and BO-DS are the first solvers to achieve $100\%$ success at the same ratio threshold.

Taken together, the results in Figure~\ref{fig:performance_profile} highlight that although all solvers are ultimately capable of solving most benchmark problems, their efficiency—particularly at tighter tolerances—varies substantially. Among them, TR-DS stands out for its robust and consistently high performance, irrespective of the specified accuracy requirement.
\begin{figure*}[htbp] 
  \centering
  \begin{subfigure}{.325\textwidth}
    \centering
    \includegraphics[width=\linewidth]{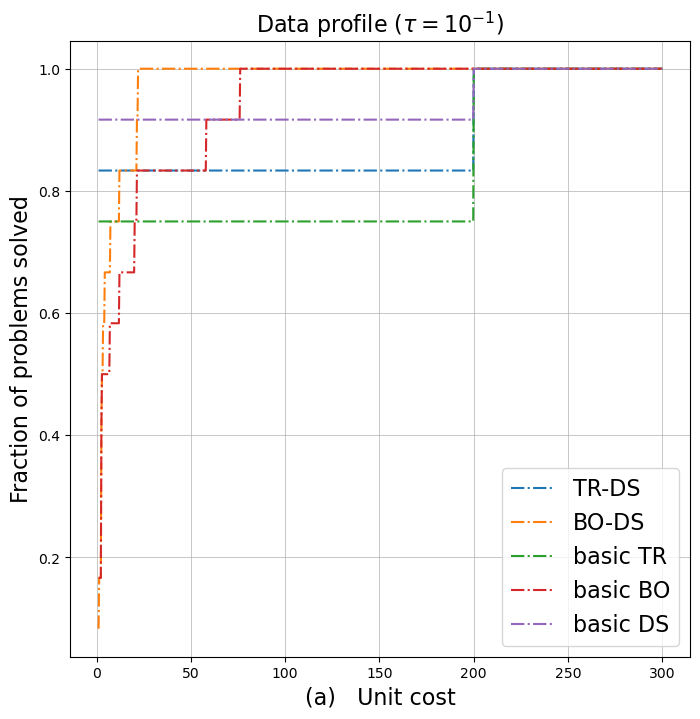}
  \end{subfigure}%
\begin{subfigure}{.325\textwidth}
    \centering
    \includegraphics[width=\linewidth]{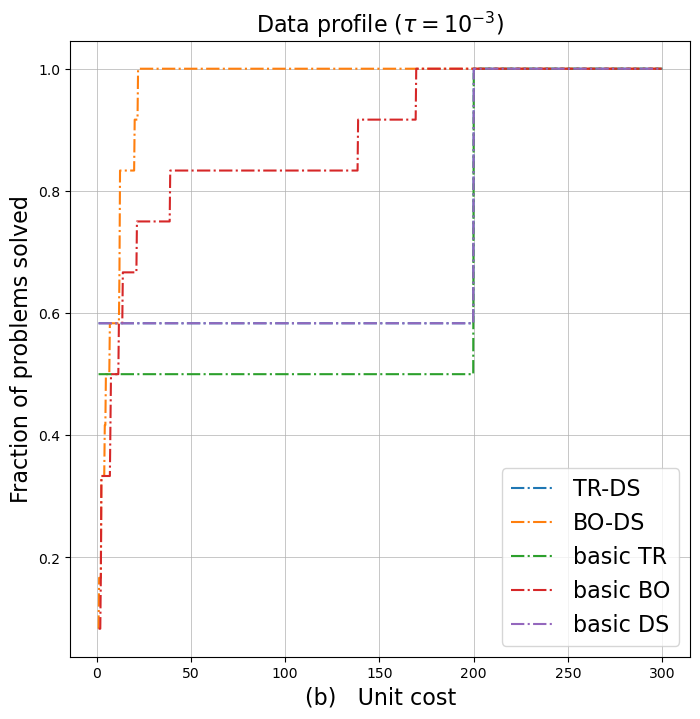}
  \end{subfigure}
  \begin{subfigure}{.325\textwidth}
    \centering
    \includegraphics[width=\linewidth]{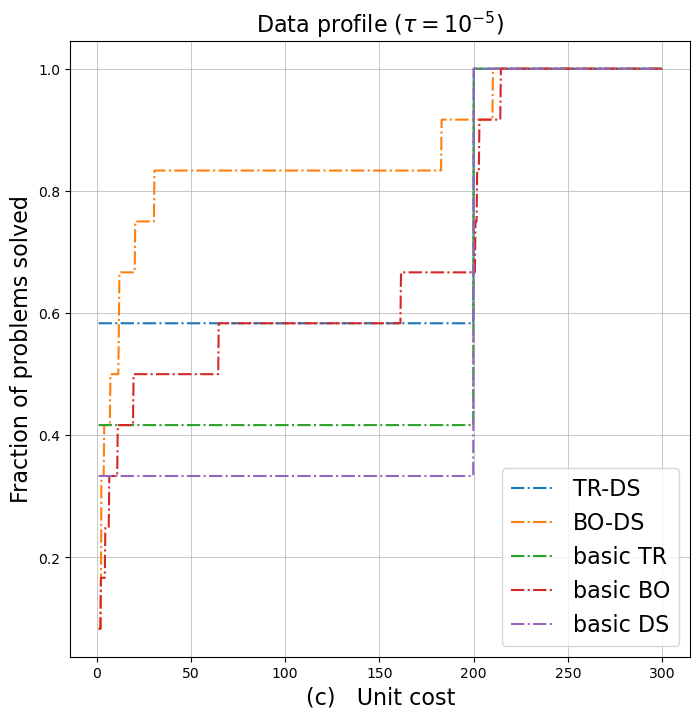}
  \end{subfigure}  
  \caption{Data profiles with different choices of $\tau$ evaluate the computational efficiency of the hybrid solver (TR-DS and BO-DS) and the baseline solver on the CUTEr benchmark set. 
  The BO-DS method shows strong performance, which solves the largest fraction of problems within small unit costs. Furthermore, TR-DS consistently outperforms the basic TR method, indicating that the switching mechanism effectively enhances the efficiency of the basic TR framework.}
  \label{fig:data_profile}
\end{figure*}

Figure~\ref{fig:data_profile} displays the data profiles corresponding to the same three convergence tolerances as in Figure~\ref{fig:performance_profile}. Notably, in Figure~\ref{fig:data_profile}(b), the curves for TR-DS and basic DS completely overlap, indicating identical performance under that setting.

Across all values of $\tau$, BO-DS demonstrates strong performance in Figure~\ref{fig:data_profile}, solving a substantial fraction of problems even at low unit costs. This stands in contrast to the performance profile results shown in Figure~\ref{fig:performance_profile}, where BO-based methods performed less favorably at lower performance ratios. A possible explanation is that Bayesian optimization is particularly effective for problems with moderate dimensionality (approximately 15 to 40 variables).

Consistent with the findings in Figure~\ref{fig:performance_profile}, both TR-DS and BO-DS outperform their basic counterparts (basic TR and basic BO) across all tolerance levels. Furthermore, as $\tau$ decreases, the curves in both figures shift downward and to the right, reflecting the increased difficulty of achieving stricter convergence.

Among the solvers, basic DS appears to be more sensitive to the tolerance level, whereas TR-DS and BO-DS exhibit relatively stable performance across all $\tau$. This robustness suggests that TR-DS and BO-DS generalize well and are well-suited for applications requiring either high solution accuracy or efficient use of computational resources.

\subsection{Machine learning (ML) applications}

In this study, we use weighted sum methods when dealing with multi-objective problems. Specifically, in the context of machine learning, the multi-objective hyperparameter tuning problem can be solved by scalarizing all the objectives into one objective and then applying the model-based DFO strategy discussed earlier to find the Pareto optimal solution. In the following, we will tackle hyperparameter optimization for regression and multi-class classification tasks.

\subsubsection{Data and its processing} 
We employ four real-world datasets sourced from two locations in Germany: Berlin and Stuttgart. These datasets contain two types of energy data: solar and wind energy. For solar energy, we formulate a regression problem, whereas for wind energy, we adopt a multi-class classification framework as in previous works in literature \cite{wind-complete-16, solar-21-thorough}. The feature data for these tasks was obtained from the National Renewable Energy Laboratory (NREL) \cite{NREL}, which includes structured information on weather factors such as temperature, humidity, and wind speed, amounting to twenty-three input features across all datasets. The generational energy data, which serves as the target variable, was sourced from Netztransparenz \cite{Netztransparenz} and covers the period from January $1$st, $2018$, to December $31$st, $2019$.

The data processing for the solar energy regression task involves refining the dataset by excluding periods when solar power generation is not possible, specifically nighttime hours from $10$ p.m. to $5$ a.m., ensuring only non-zero production data points are included. This preprocessing step results in approximately $36,000$ data points for each solar dataset. 

For the wind energy multi-class classification task, we standardize all features (excluding the target variable) using Z-scores. The actual wind power generation data is divided into five nearly equal-sized classes to maintain a balanced dataset, resulting in around $70,000$ data points per dataset. The choice of five classes is made to avoid imbalances that could adversely affect the classification performance. 

Both types of datasets are split into training, test, and validation sets using an $80\% : 10\% : 10\%$ ratio. This preparation ensures that the models trained on these datasets can make predictions and generalizations, thereby showing the efficacy of our method in real-world scenarios.

\subsubsection{Metrics for ML problems}
To comprehensively evaluate the performance of our algorithm on the given datasets, we considered three types of metrics: accuracy, bias, and complexity. These metrics are chosen to ensure an assessment of the models in terms of prediction performance, fairness, and structural simplicity.

\textbf{Accuracy metrics:} For the solar energy regression problem, the primary accuracy metrics used are \textit{Mean Squared Error (MSE)} and \textit{Mean Bias Error (MBE)}. MSE measures the average of the squares of the errors, while MBE quantifies the average bias in the predictions. These are calculated as:
\begin{align*}
    MSE(y) = \frac{1}{N} \sum_{i=1}^N \left( yd_i \right)^2, \quad
    MBE(y) = \frac{1}{N} \left|\sum_{i=1}^N yd_i \right|,
\end{align*}
where $N$ is the number of samples, $y^{\text{true}}$ and $y^{\text{pred}}$ are the true and predicted labels, respectively, and $yd = y^{\text{true}} - y^{\text{pred}}$.

For the wind energy multi-class classification task, we use \textit{Accuracy Error (ACE)}, which measures the proportion of incorrectly classified instances:
\begin{align*}
    ACE(y) = 1 - \frac{1}{N}\sum_{i=1}^N \left|yd_i\right|.
\end{align*}

\textbf{Bias metrics:} To evaluate fairness in the models, particularly for the wind energy multi-class classification task, we use \textit{Demographic Parity Difference (DPD)} and \textit{Symmetric Distance Error (SDE)}. DPD measures the difference in average predictions across groups defined by a binary attribute $a_i \in \{0, 1\}$:
\begin{align*}
    A_0 &:= \{i \in \{1,\dots,N\} \mid a_i = 0\}, \\
    A_1 &:= \{i \in \{1,\dots,N\} \mid a_i = 1\},
\end{align*}
where $A_0$ and $A_1$ contain $N_0$ and $N_1$ samples respectively, and $N = N_0 + N_1$. SDE captures the discrepancy between false negative rates (FNR) and false positive rates (FPR) across $H$ binary features:
\begin{align*}
    SDE = \frac{1}{H} \sum_{c=1}^H \left| \Delta FNR(c) - \Delta FPR(c) \right|.
\end{align*}

\textbf{Model complexity:} Model complexity is assessed differently based on the architecture. For simpler models like K-Nearest Neighbors (KNN), we focus on accuracy and bias metrics. For more complex models such as ResNet and XGBoost, we include structural metrics.

In ResNet models, we use \textit{Weight Sparsity (WSpar)} to denote the proportion of zero or near-zero weights. To quantify complexity, we minimize \textit{Weight Density (WDens)}, defined as:
\begin{align*}
    WDens = 1 - WSpar.
\end{align*}

In XGBoost models, complexity is measured by the \textit{average depth (ADep)} of the trees. Deeper trees indicate higher model complexity. Table~\ref{table:metrics_for_different_models} summarizes the metrics used for different model and problem types.

\begin{table}[htbp]
\centering
\caption{Metrics to minimize in different models.}
\label{table:metrics_for_different_models}
\begin{tabular}{|l|l|l|}
\hline
Problem type & Model & Metrics  \\
\hline
\multirow{3}{4em}{Solar regression} & ResNet & MSE, MBE, WDens \\
 & XGBoost & MSE, MBE, ADep \\
 & KNN & MSE, MBE \\
\hline
\multirow{3}{4em}{Wind classification} & ResNet & ACE, DPD, WDens \\
 & XGBoost & ACE, DPD, ADep \\
 & KNN & ACE, SDE \\
\hline
\end{tabular}
\end{table}

\textbf{Hypervolume indicator:} In multi-objective optimization, the \textit{hypervolume indicator}~\cite{zitzler2002multiobjective,kannan2021hyperaspo} is a widely accepted metric that assesses the quality of a set of solutions by measuring the volume of the objective space dominated by the Pareto front relative to a reference point. A higher hypervolume signifies better and more diverse solutions.

\subsubsection{Results for machine learning problems}

We address multi-objective optimization by converting it into a sequence of scalarized single-objective problems using a weighted-sum method. The weights are sampled from a uniform distribution $w\in \mathbb{W}^{\textrm{uniform}}$ such that $\sum_{i=1}^m w_i = 1$. The goal is to demonstrate the efficacy of our hybrid algorithms (TR-DS and BO-DS) compared to their base solvers, rather than optimizing the scalarization process itself.

For bi-objective problems, we use five evenly spaced weight pairs:
\begin{align*}
    (w_1, w_2) \in \{(0.25i, 1 - 0.25i) \mid i \in \{0,1,2,3,4\}\}.
\end{align*}
For tri-objective cases:
\begin{align*}
    w^T = \frac{1}{3}(i_1, i_2, i_3), \quad i_1 + i_2 + i_3 = 3, \quad i_j \in \{0,1,2,3\}.
\end{align*}
We compute the hypervolume across the aggregate solution sets from all weight instances.

We evaluate the algorithms on ResNet, XGBoost, and KNN models, optimizing relevant hyperparameters. Table~\ref{table:HPO} lists the selected hyperparameters along with their lower and upper bounds. For KNN, $\varphi_m$ denotes the power parameter in the Minkowski metric.

\begin{table}[htbp]
\centering
\caption{Hyperparameters selected for optimization in each model.}
\label{table:HPO}
\begin{tabular}{|l|l|c|c|}
\hline
Model & Hyperparameters & lb & ub \\
\hline
\multirow{3}{*}{ResNet} & Number of residual blocks & 1 & 15 \\
 & Number of neurons & 8 & 128 \\
 & Dropout rate & 0.01 & 0.5 \\
\hline
\multirow{5}{*}{XGBoost} & Number of estimators & 1 & 50 \\
 & Max depth & 2 & 20 \\
 & Max leaves & 1 & 20 \\
 & Min child weight & 0.1 & 5 \\
 & Gamma & 0 & 0.5 \\
\hline
\multirow{2}{*}{KNN} & Number of neighbors & 1 & 20 \\
 & $\varphi_m$ (Minkowski) & 1 & 4 \\
\hline
\end{tabular}
\end{table}

\begin{figure*}[h] 
  \centering
  \begin{subfigure}{.325\textwidth}
    \centering
    \includegraphics[width=\linewidth]{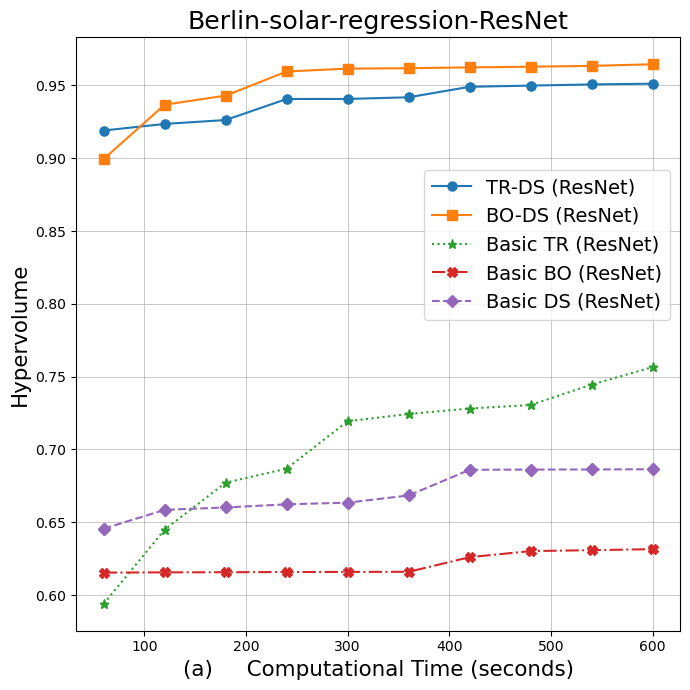}
  \end{subfigure}%
\begin{subfigure}{.325\textwidth}
    \centering
    \includegraphics[width=\linewidth]{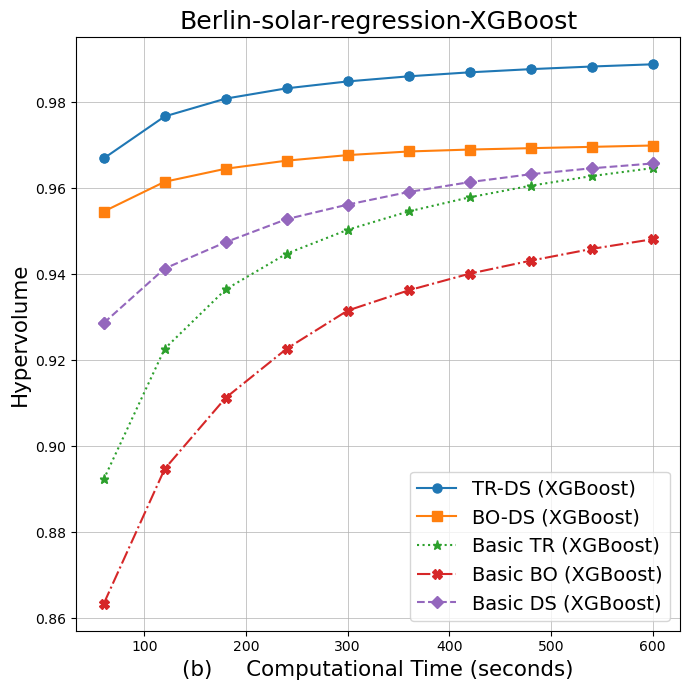}
  \end{subfigure}
  \begin{subfigure}{.325\textwidth}
    \centering
    \includegraphics[width=\linewidth]{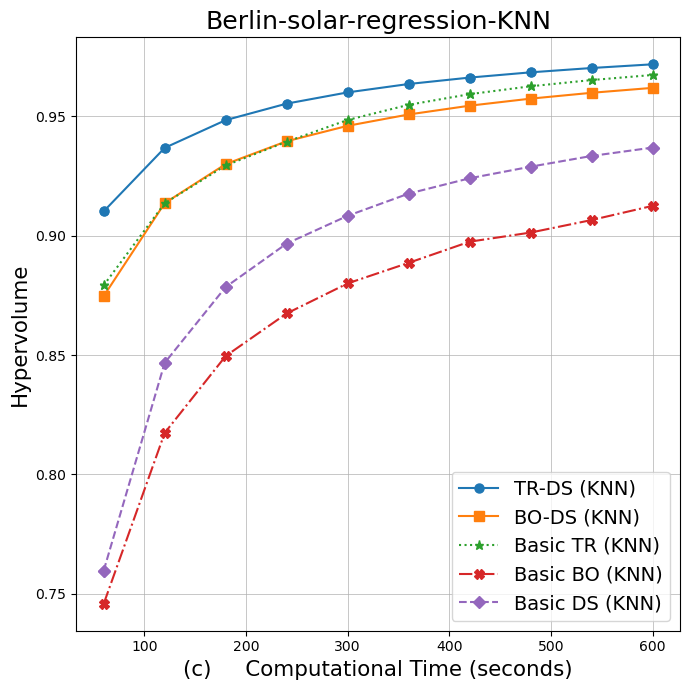}
  \end{subfigure}  \\
    \begin{subfigure}{.325\textwidth}
    \centering
    \includegraphics[width=\linewidth]{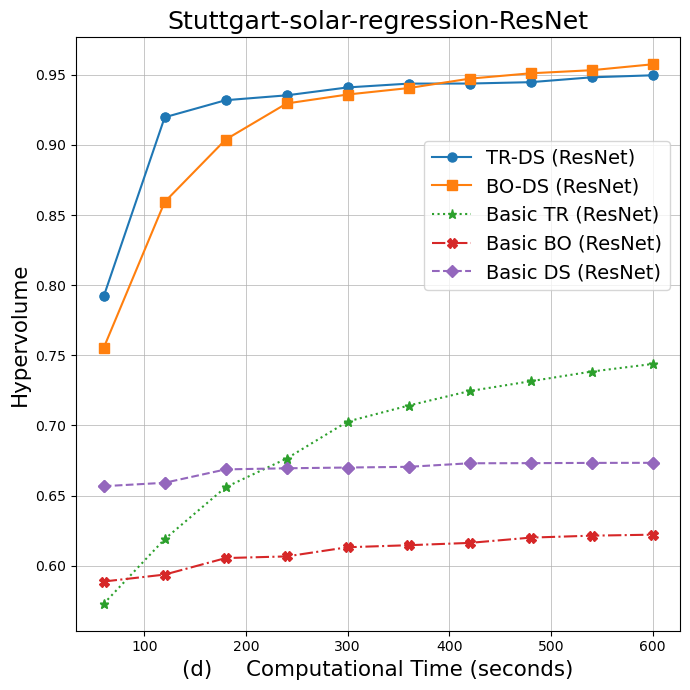}
  \end{subfigure}
\begin{subfigure}{.325\textwidth}
    \centering
    \includegraphics[width=\linewidth]{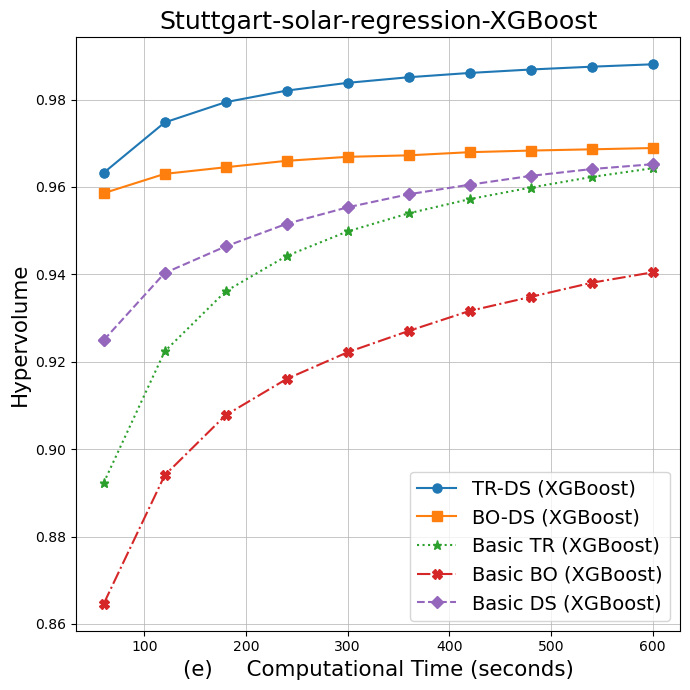}
  \end{subfigure}
 \begin{subfigure}{.325\textwidth}
    \centering
    \includegraphics[width=\linewidth]{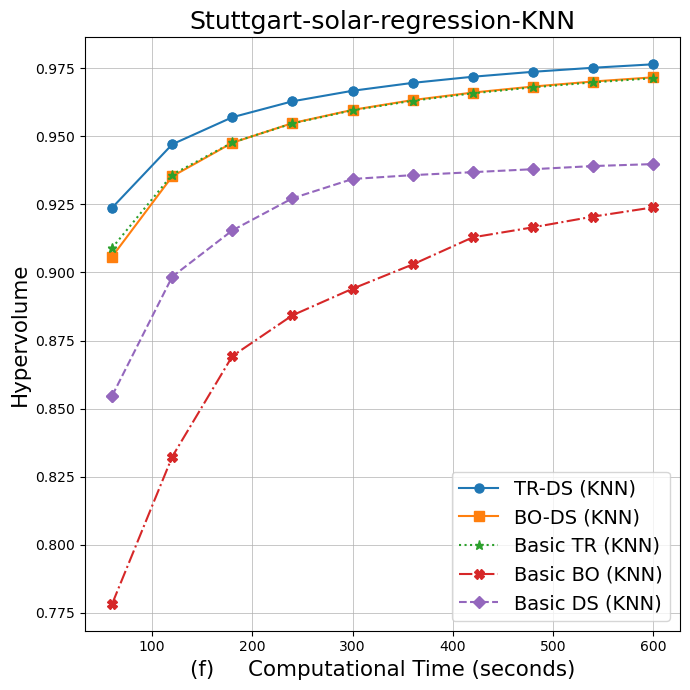}
  \end{subfigure}
  \caption{Hypervolume trajectories for solar regression tasks across ResNet, XGBoost, and KNN models. 
  TR-DS and BO-DS achieve higher hypervolume values more rapidly than the baseline solvers (basic TR and BO). Performance gaps become more noticeable in the complex ResNet and XGBoost models. 
  }
  \label{fig:resnet_XGB_KNN_regression}
\end{figure*}

Figure~\ref{fig:resnet_XGB_KNN_regression} shows the performance of different methods on regression tasks using ResNet, XGBoost, and KNN across two solar datasets (Berlin and Stuttgart). It is evident that TR-DS and BO-DS generally outperform their respective basic solver counterparts. This is particularly prominent in the ResNet models, where TR-DS and BO-DS achieve hypervolumes exceeding $0.95$, while the basic solvers remain below $0.75$.

The choice of machine learning model also influences optimization outcomes. For XGBoost, the performance gap narrows; however, TR-DS still yields the highest hypervolume. In the case of KNN, all solvers attain hypervolumes above $0.91$, suggesting that for simpler models, the advantage of combined methods is less pronounced. Nevertheless, TR-DS and BO-DS maintain consistent strengths, underscoring their robustness across varying model complexities. In contrast, basic solvers display more erratic behavior and are more sensitive to the choice of model.

In addition to the regression task, we evaluate solver performance on multi-class classification problems. Figure~\ref{fig:resnet_XGB_KNN_multiclass} illustrates the results of applying different solvers to ResNet, XGBoost, and KNN on the wind datasets.

\begin{figure*}[h] 
\centering
 \begin{subfigure}{.325\textwidth}
    \centering
    \includegraphics[width=\linewidth]{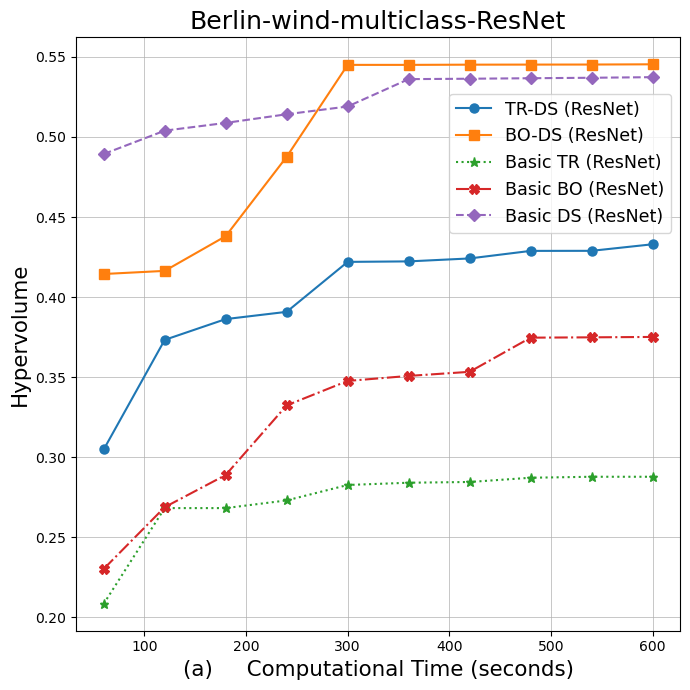}
  \end{subfigure}
   \begin{subfigure}{.325\textwidth}
    \centering
    \includegraphics[width=\linewidth]{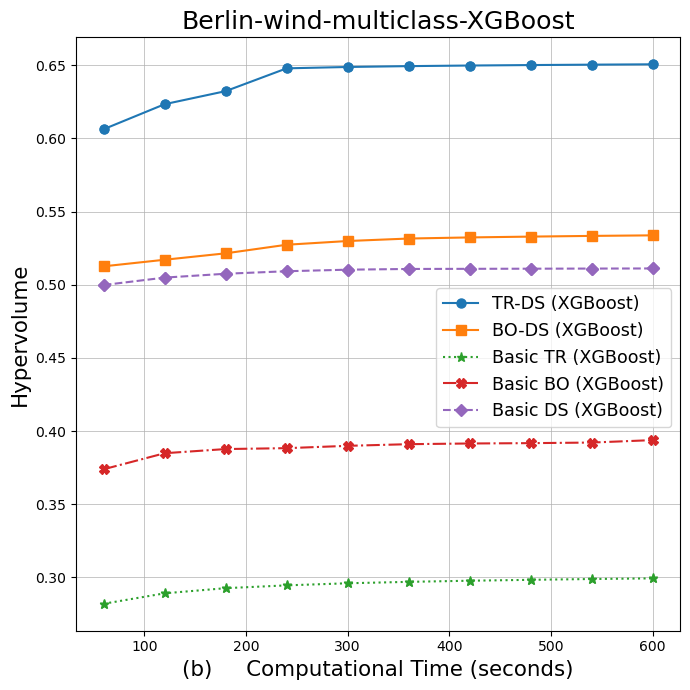}
  \end{subfigure}
 \begin{subfigure}{.325\textwidth}
    \centering
    \includegraphics[width=\linewidth]{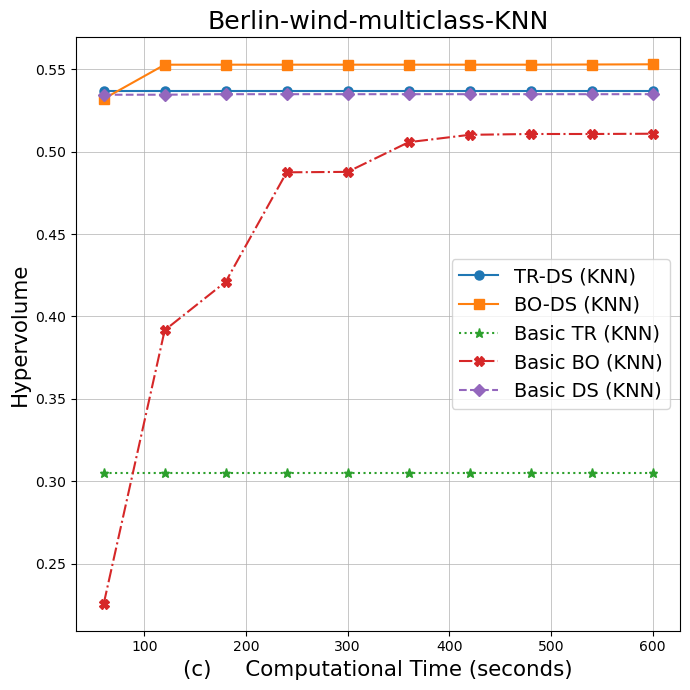}
  \end{subfigure}\\
 \begin{subfigure}{.325\textwidth}
    \centering
    \includegraphics[width=\linewidth]{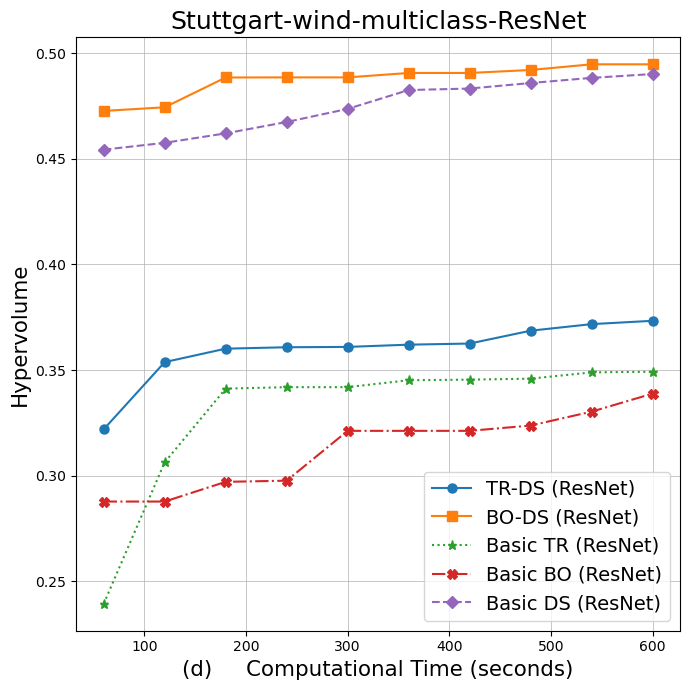}
  \end{subfigure} 
 \begin{subfigure}{.325\textwidth}
    \centering
    \includegraphics[width=\linewidth]{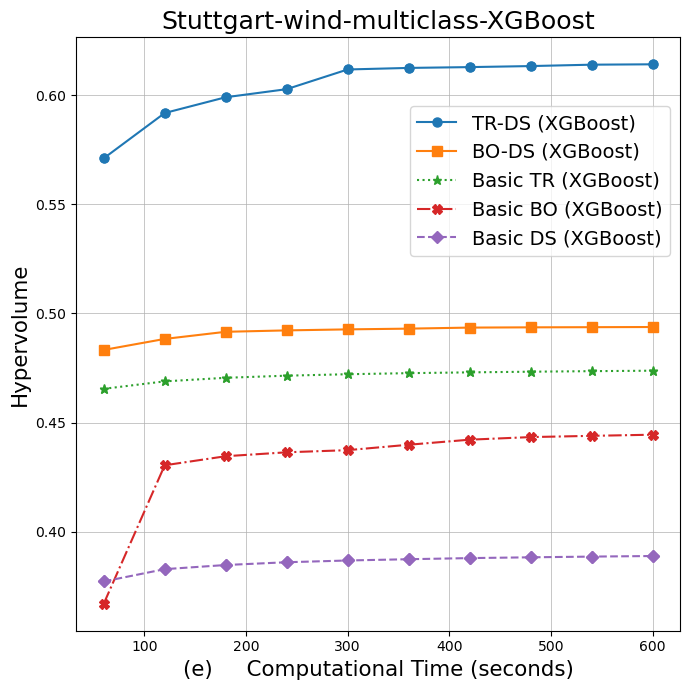}
  \end{subfigure} 
  \begin{subfigure}{.325\textwidth}
    \centering
    \includegraphics[width=\linewidth]{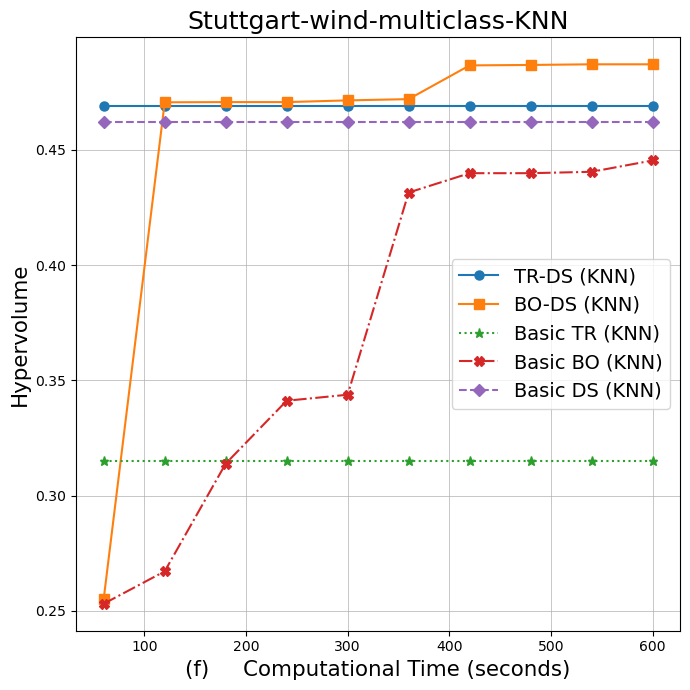}
  \end{subfigure}
  \caption{Hypervolume trajectories for wind energy multi-class classification tasks. 
  While absolute hypervolume values are lower than those of regression tasks due to classification metrics, the performance of TR-DS and BO-DS still outperforms the basic TR and BO solvers, with TR-DS presenting strong performance in XGBoost models.
}
  \label{fig:resnet_XGB_KNN_multiclass}
\end{figure*}

As observed in Figure~\ref{fig:resnet_XGB_KNN_multiclass}, the BO-DS solver generally exhibits strong performance in multi-class classification settings, particularly when used with ResNet and KNN. On ResNet, BO-DS achieves the highest hypervolume among all methods. TR-DS performs moderately but consistently surpasses both basic TR and basic BO. Interestingly, basic DS demonstrates robust performance and ranks as the second-best solver, only marginally behind BO-DS. On XGBoost, TR-DS achieves the highest hypervolume, indicating its effectiveness when used with decision tree-based models. Although BO-DS also performs well in this setting, the advantage of TR-DS is more apparent. For KNN, TR-DS, BO-DS, and basic DS all converge to similar high hypervolume values across both datasets. While basic BO improves steadily over time, its final performance remains lower than TR-DS, BO-DS, and basic DS when the time budget is exhausted.

Notably, the overall hypervolume values in multi-class classification tasks are significantly lower than those in the regression problems. This discrepancy is partly due to the use of different evaluation metrics tailored to each task.

From Figures~\ref{fig:resnet_XGB_KNN_regression} and~\ref{fig:resnet_XGB_KNN_multiclass}, it is clear that integrating direct search mechanisms significantly enhances the performance of basic solvers across various models and datasets. Specifically, TR-DS and BO-DS consistently outperform their respective base algorithms, highlighting the critical role of the DS mechanism in identifying superior solutions.

\section{Conclusion and future research} \label{sec:conclusion}

In this paper, we proposed hybrid model-based and search-driven methods for derivative-free optimization (DFO), with a central theoretical contribution being the global convergence guarantee for the TR-DS method. Our numerical experiments further support its practical effectiveness. We benchmarked TR-DS against classical trust-region (TR) methods and direct search (DS) methods, Bayesian optimization (BO), and a recent BO-DS hybrid approach~\cite{lion24-aswin} across standard DFO test suites and real-world tasks.

The results consistently indicate that TR-DS yields higher-quality solutions within tight computational budgets when compared to its basic counterparts. Moreover, in the context of multi-objective hyperparameter optimization (HPO) for machine learning, TR-DS achieves larger hypervolumes with significantly reduced computation time relative to basic TR, DS, and BO algorithms.

A promising direction for future research lies in developing a formal convergence analysis for the BO-DS approach, which emerged as a strong empirical performer in our study. Additionally, extending the TR-DS framework to effectively handle constrained DFO problems presents another valuable avenue for algorithmic enhancement.

\paragraph{Acknowledgments}

This research was funded by the Deutsche Forschungsgemeinschaft (DFG, German Research Foundation) under Germany's Excellence Strategy – The Berlin Mathematics Research Center MATH+ (EXC-2046/1, project ID: 390685689).
\bibliographystyle{plain}
\bibliography{bibliography}
\end{document}